\documentclass[leqno,12pt]{article} 
\setlength{\textheight}{23cm}
\setlength{\textwidth}{16cm}
\setlength{\oddsidemargin}{0cm}
\setlength{\evensidemargin}{0cm}
\setlength{\topmargin}{0cm}
\usepackage{soul}
\usepackage{amsmath, amssymb}
\usepackage{amsthm} 
\usepackage{amssymb}
\usepackage{mathrsfs}
\usepackage{amssymb, url, color, graphicx, amscd, mathrsfs}
\usepackage[colorlinks=true, bookmarks=true, pdfstartview=FitH, pagebackref=true, linktocpage=true, linkcolor = magenta, citecolor = blue]{hyperref}
\usepackage{graphicx}
\usepackage{times}
\newcommand{\KN}{\mathbin{\bigcirc\mspace{-15mu}\wedge\mspace{3mu}}}
%
%
\theoremstyle{plain} 
\newtheorem{theorem}{\indent\sc Theorem}[section]
\newtheorem{lemma}[theorem]{\indent\sc Lemma}
\newtheorem{corollary}[theorem]{\indent\sc Corollary}
\newtheorem{proposition}[theorem]{\indent\sc Proposition}

\theoremstyle{definition} 
\newtheorem{definition}[theorem]{\indent\sc Definition}
\newtheorem{remark}[theorem]{\indent\sc Remark}
\newtheorem{example}[theorem]{\indent\sc Example}

\numberwithin{equation}{section}
%

%

\DeclareMathOperator{\Rm}{Rm}

\DeclareMathOperator{\Ric}{Ric}

\makeatletter
\def\address#1#2{\begingroup
\noindent\parbox[t]{7.8cm}{%
\small{\scshape\ignorespaces#1}\par\vskip1ex
\noindent\small{\itshape E-mail address}%
\/: #2\par\vskip4ex}\hfill%
\endgroup}%
\makeatother
%
\pagestyle{myheadings}

\title{{Vanishing results from Lichnerowicz Laplacian on complete K\"{a}hler manifolds and applications}} 
\author{
%
%
\textsc{Gunhee Cho, Nguyen Thac Dung} 
}
\date{} 
%

\usepackage[pagewise]{lineno}
\begin{document}

\maketitle



\begin{abstract}
	In this paper, we show several rigidity results for harmonic $(p,q)$-forms in complete K\"{a}hler manifolds. We also give several applications to study non-compact K\"{a}hler manifolds with parallel Bochner tensor or quaternion K\"{a}hler manifolds. Our results are natural extensions of Petersen and Wink's results in \cite{PW21, PW} in the setting of complete, non-compact K\"{a}hler manifolds. 
\end{abstract}
\section{Introduction}

This paper is a next study conducted by the authors (see \cite{CDH}) in the K\"ahler manifold setting.

Denote $\Delta_{L}=\nabla^{*} \nabla+c \Ric$ as Lichnerowicz Laplacian for $c >0$. The precise description of $\Delta_{L}$ and all necessary definitions and notations are described in Section 2. As given in Section 2, let $\mathfrak{R}_{|\mathfrak{u}(m)}$ be K\"{a}hler curvature operator, then the first result of this paper is stated as follows.

	\begin{theorem}\label{thm1}
	Let $(M,g)$ be a complete, non-compact K\"ahler manifold. Assume that $g(\mathfrak{R}_{|\mathfrak{u}(m)}(T^\mathfrak{u}), \overline{T}^\mathfrak{u})$ is nonnegative for every harmonic $(0,k)$-tensor $T$. Then every harmonic tensor $T$ (with respect to the Lichnerowicz Laplacian) is parallel if $|T| \in L^Q(M)$ for some $Q\geq2$. 
\end{theorem}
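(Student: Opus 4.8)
The plan is to establish a Bochner-type Weitzenböck formula and then run a cutoff argument exploiting the integrability assumption. The key point is that a harmonic tensor $T$ (meaning $\Delta_L T = 0$) satisfies $\nabla^*\nabla T + c\,\Ric\,T = 0$, and pairing this with $T$ after integrating against a suitable cutoff should force $\nabla T = 0$ once we know the curvature term has the right sign.

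First I would derive the pointwise Bochner identity. Taking the inner product of $\Delta_L T = 0$ with $T$ and integrating by parts, the rough Laplacian contributes $\int |\nabla T|^2$ (modulo boundary terms) while the curvature term contributes the quadratic form $g(\mathfrak{R}_{|\mathfrak{u}(m)}(T^\mathfrak{u}), \overline{T}^\mathfrak{u})$ — this is exactly the expression assumed nonnegative in the hypothesis. The identification of $c\,\Ric$ acting on $T$ with the Kähler curvature operator $\mathfrak{R}_{|\mathfrak{u}(m)}$ evaluated on $T^\mathfrak{u}$ is presumably the content of the Weitzenböck machinery set up in Section 2, so I would invoke that. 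The upshot is a refined Kato-type inequality together with a differential inequality of the form
\begin{equation}
\Delta \tfrac{1}{2}|T|^2 \ge |\nabla T|^2 - |\nabla|T||^2 + g(\mathfrak{R}_{|\mathfrak{u}(m)}(T^\mathfrak{u}), \overline{T}^\mathfrak{u}) \ge 0
\end{equation}
in the weak (distributional) sense.

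Next I would handle the noncompactness via a Saloff-Coste / Li-Schoen style cutoff. Choose a logarithmic cutoff $\varphi$ supported on the ball $B(2R)$, equal to $1$ on $B(R)$, with $|\nabla \varphi| \le C/R$. Multiply the differential inequality by $\varphi^2 |T|^{Q-2}$ (or an appropriate power adapted to the $L^Q$ hypothesis) and integrate. After integration by parts, the gradient terms involving $\nabla T$ are absorbed on the left, while the error terms carry a factor $|\nabla \varphi|^2 \le C/R^2$ multiplied by $\int_{B(2R)} |T|^Q$. Since $|T| \in L^Q(M)$, these error integrals are bounded and, divided by $R^2$, tend to zero as $R \to \infty$. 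This kills the boundary contributions and, combined with the nonnegativity of the curvature term, forces $\int_M |\nabla T|^2 \varphi^2 |T|^{Q-2} = 0$, hence $\nabla T = 0$ on all of $M$.

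I expect the main obstacle to be the bookkeeping when $Q = 2$ versus $Q > 2$: the refined Kato inequality and the choice of the weight $|T|^{Q-2}$ must be calibrated so that the interpolation between $|\nabla T|^2$ and $|\nabla |T||^2$ produces a coefficient with the correct sign, and one must check that $|T|^{Q-2}$ does not introduce an unintegrable singularity where $T$ vanishes. For the borderline case $Q=2$ the cutoff estimate is tight, so the argument relies on the positive-definiteness improvement coming from the Kato inequality rather than slack in the power; care is needed to ensure the constants work out and that the limit $R \to \infty$ genuinely sends the error to zero rather than leaving a residual term.
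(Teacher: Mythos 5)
Your proposal is correct and is essentially the paper's own proof: the Bochner--Weitzenb\"ock identity $\Delta\tfrac12|T|^2=|\nabla T|^2+c\,g(\mathfrak{R}_{|\mathfrak{u}(m)}(T^{\mathfrak{u}}),\overline{T}^{\mathfrak{u}})$ (via Proposition~\ref{ric}), followed by a cutoff integration with weight $\varphi^2|T|^{Q-2}$ and the $L^Q$ hypothesis, is exactly the argument the paper uses, with the cutoff details deferred to \cite{CDH}. One cosmetic remark: your displayed inequality conflates $\Delta\tfrac12|T|^2$ with $|T|\Delta|T|$ (the $-|\nabla|T||^2$ term belongs to the latter) and drops the constant $c$, but the chain actually needed, $\Delta\tfrac12|T|^2\ge|\nabla T|^2\ge|\nabla|T||^2$, uses only the ordinary (not refined) Kato inequality and is what your cutoff argument runs on.
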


With a presense of a negative lower bound of $g(\mathfrak{R}_{|\mathfrak{u}(m)}(T^\mathfrak{u}), \overline{T}^\mathfrak{u})$, we need some more assumption that the weighted Poincar\'e inequality holds (\cite[Def 0.1]{LPWJ}): for $M^n$ an $n$-dimensional complete Riemannian manifold, we say that $M$ satisfies a weighted Poincar\'e inequality with a nonnegative weight function $\rho$ on $M$, if the inequality
\begin{equation*}
\int_{M} \rho(x)\phi^2(x)dV \leq \int_{M} |\nabla \phi|^2 dV
\end{equation*}
is valid for any compactly supported smooth function $\phi \in C^{\infty}_0(M)$. 
We put two addtional hypotheses on $\rho$, 
\begin{equation}\label{eq:C1}
\liminf_{x\rightarrow \infty} \rho(x)>0,
\end{equation}
and 
\begin{equation}\label{eq:C2}
\text{$M$ is nonparabolic},
\end{equation}
i.e., there exists a symmetric positive Green's function $G(x,y)$ for the Laplacian acting on $L^2$ functions (otherwise, we say $M$ is parabolic). All assumptions on a weight function $\rho$ can be regarded as the generalization of the positivity condition of the first Dirichlet eigenvalue $\lambda_1(M)$ \cite{LPWJ}. {We refer the interested readers to \cite{LPWJ} for several examples regarding Riemannian manifolds with weighted Poincar\'{e} inequality and also see \cite{CXZD} to the of study minimal hypersurfaces. Some other examples are given by Minerbe in \cite{MV09}. Moreover, K\"{a}hler manifolds with weighted Poincar\'{e} inequality are also investigated in \cite{LW09, Mun07}. }

Our second result is formulated as follows.
\begin{theorem}\label{thm2}
	Let $(M,g)$ be a connected complete non-compact K\"ahler manifold. Assume that  $M$ satisfies a weighted Poincar\'e inequality with a nonnegative weight function $\rho$ with ~\eqref{eq:C1} and ~\eqref{eq:C2}, and also $g(\mathfrak{R}_{|\mathfrak{u}(m)}(T^\mathfrak{u}), \overline{T}^\mathfrak{u}) \geqslant -\kappa \rho|T|^2$ for all $(0,k)$-tensors $T$, where $\kappa\geq0$ is given. Then every harmonic tensor $T$ (with respect to the Lichnerowicz Laplacian) vanishes provided that $|T| \in L^Q(M), Q\geq2$ and $0\leqslant \kappa<\frac{4(Q-1)}{cQ^2}$. \end{theorem}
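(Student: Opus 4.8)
The plan is to combine the refined Weitzenböck formula from Section~2 with a Kato-type inequality to produce a differential inequality for $u:=|T|$, and then to run a Caccioppoli/integral estimate in which the curvature defect is absorbed through the weighted Poincaré inequality. Concretely, for a tensor with $\Delta_L T=0$ the Bochner identity of Section~2 reads (with $\Delta=\divg\nabla$)
$$\tfrac12\Delta|T|^2 = |\nabla T|^2 + c\,g\!\left(\mathfrak R_{|\mathfrak u(m)}(T^{\mathfrak u}),\overline T^{\mathfrak u}\right),$$
and writing $\tfrac12\Delta u^2 = u\Delta u + |\nabla u|^2$ together with the Kato inequality $|\nabla T|\ge|\nabla u|$ yields the weak inequality
$$u\,\Delta u \;\ge\; c\,g\!\left(\mathfrak R_{|\mathfrak u(m)}(T^{\mathfrak u}),\overline T^{\mathfrak u}\right)\;\ge\;-\,c\kappa\,\rho\,u^2,$$
valid distributionally after the standard regularization $u_\varepsilon=\sqrt{|T|^2+\varepsilon^2}$ near the zero set of $T$.

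The heart of the argument is the choice of exponent. I would test this inequality against $\phi^2 u^{Q-1}$ with $\phi\in C_0^\infty(M)$ and set $w:=u^{Q/2}$, so that $u\in L^Q$ is exactly $w\in L^2$. Integrating by parts and converting all derivatives of $u$ into derivatives of $w$ gives
$$\frac{4(Q-1)}{Q^2}\int_M\phi^2|\nabla w|^2\,dV + \frac{4}{Q}\int_M \phi\,w\,\langle\nabla\phi,\nabla w\rangle\,dV \;\le\; c\kappa\int_M\phi^2\rho\,w^2\,dV.$$
Applying the weighted Poincaré inequality to the Lipschitz, compactly supported function $\phi w$ bounds $\int_M\phi^2\rho w^2\,dV$ by $\int_M|\nabla(\phi w)|^2\,dV$, and substituting this on the right-hand side produces, after collecting terms,
$$\left(\frac{4(Q-1)}{Q^2}-c\kappa\right)\int_M\phi^2|\nabla w|^2\,dV \;\le\; \left(2c\kappa-\tfrac4Q\right)\int_M \phi\,w\,\langle\nabla\phi,\nabla w\rangle\,dV + c\kappa\int_M w^2|\nabla\phi|^2\,dV.$$
Here the hypothesis $0\le\kappa<\tfrac{4(Q-1)}{cQ^2}$ is precisely what makes the leading coefficient $A:=\tfrac{4(Q-1)}{Q^2}-c\kappa$ strictly positive; the cross term is then handled by Young's inequality with a parameter small enough to absorb its gradient part into the left-hand side, leaving an estimate of the form $\tfrac{A}{2}\int_M\phi^2|\nabla w|^2\,dV\le C\int_M w^2|\nabla\phi|^2\,dV$.

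To finish, I would insert standard cutoffs $\phi=\phi_R$ with $\phi_R\equiv1$ on $B_R$, $\operatorname{supp}\phi_R\subset B_{2R}$, and $|\nabla\phi_R|\le 2/R$. Since $\int_M w^2\,dV=\| |T| \|_{L^Q}^Q<\infty$, the right-hand side is $\le \tfrac{4C}{R^2}\int_M w^2\,dV\to0$, forcing $\nabla w\equiv0$; hence $w$, and therefore $u=|T|$, is constant. It remains to upgrade "constant'' to "zero'': testing the weighted Poincaré inequality against $\phi_R$ and using $\liminf_{x\to\infty}\rho>0$ shows that $M$ must have infinite volume, since if $\operatorname{Vol}(M)<\infty$ then $\int_M\rho\le\liminf_R\int_M|\nabla\phi_R|^2=0$, contradicting the positivity of $\rho$ near infinity. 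A constant lying in $L^2(M)$ on an infinite-volume manifold must vanish, so $T\equiv0$.

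The step I expect to be the main obstacle is the exact bookkeeping in the second paragraph: one must feed the weighted Poincaré inequality into the integrated Bochner inequality so that the curvature term is fully absorbed while the coefficient of $\int_M\phi^2|\nabla w|^2\,dV$ stays positive, and the cross term must be dominated by Young's inequality without destroying that positivity. This balance is what pins down the sharp threshold $\tfrac{4(Q-1)}{cQ^2}$, and getting the constants to line up exactly—rather than merely proving vanishing under a cruder bound on $\kappa$—is the delicate part; a secondary technical point is justifying the distributional manipulations across the zero set of $T$ via the $u_\varepsilon$-regularization.
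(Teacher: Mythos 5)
Your proposal is correct and follows essentially the same route as the paper: the paper's proof integrates the Bochner inequality against $\varphi^2|T|^{Q-1}$ to get $c\kappa\int_M\rho\varphi^2|T|^Q\ge 2\int_M\varphi|T|^{Q-1}\langle\nabla\varphi,\nabla|T|\rangle+(Q-1)\int_M\varphi^2|T|^{Q-2}|\nabla|T||^2$, absorbs the curvature term through the weighted Poincar\'e inequality exactly as you do (the detailed bookkeeping with the threshold $\kappa<\frac{4(Q-1)}{cQ^2}$ is deferred to \cite{CDH}), obtains $\int_M\varphi^2|T|^{Q-2}|\nabla|T||^2\le\frac{4C}{R^2}\int_M|T|^Q$, and concludes that $|T|$ is constant and then zero. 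The only (harmless) divergence is the final step: the paper deduces that $M$ has infinite volume from nonparabolicity~\eqref{eq:C2} via \cite[Corollary 3.2]{LPWJ}, whereas you derive infinite volume from~\eqref{eq:C1} and the weighted Poincar\'e inequality by a Fatou-type argument --- both are valid under the stated hypotheses.
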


As a consequence of Theorem~\ref{thm2}, we have the following classification theorem which is motivated by the work of Bryant \cite{Bry} and Kamishima \cite{kam} in classifying compact and complete Bochner flat manifolds. 

\begin{theorem}\label{d2} 
	Suppose that $(M,g)$ is a complete K\"{a}hler {non-compact} manifold of complex dimension $n$ satisfying that the Bochner tensor is divergence free. If $M$ satisfies a weighted Poincar\'e inequality and
	$$\mu_1+\ldots+\mu_{\lfloor\frac{n+1}{2}\rfloor}+\frac{1+(-1)^n}{4}\mu_{\lfloor\frac{n+1}{2}\rfloor+1}\geq-k\rho$$
	then the Bochner tensor vanishes provided that its $L^Q$-norm is finite and $0\leq k<\frac{Q-1}{Q^2}, Q\geq2$. Consequently, if we assume furthermore that $M$ is the complete Bochner flat geometry as in \cite{kam} then $M$ must be one of the following forms:
		\begin{enumerate}
			\item Complex Euclidean geometry $(\mathbb{C}^n\rtimes {\rm U}(n), \mathbb{C}^n, g_\mathbb{C}, J_\mathbb{C})$,
			\item Product of complex hyperbolic and projective geometry $
			({\rm PU}(m, 1) \times {\rm PU}(n - m +1), \mathbb{H}^m_\mathbb{C}\times\mathbb{CP}^{n-m}, g_\mathbb{H}\times g_\mathbb{CP}, J_\mathbb{H}\times J_\mathbb{CP}), m = 0,\ldots, n$,
			\item Intransitive K\"{a}hler geometry $(\mathbb{C}^{n-k}\rtimes{\rm U}(n- k))\times {\rm U}(\ell_1,\ldots, \ell_m), \mathbb{C}^n, \hat{g}_a, J_\mathbb{C})$, $k\geq1$.
		\end{enumerate}
		Moreover, if the first two cases hold and $M$ is irreducible then $M$ is of constant holomorphic sectional curvature. Consequently, $M$ is $\mathbb{C}^n$, $\mathbb{CP}^n$, $\mathbb{B}^n$, or their quotients. Here we refer the reader to \cite{kam} for the notation of Bochner flat geometry and models.
\end{theorem}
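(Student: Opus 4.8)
The plan is to apply Theorem~\ref{thm2} to the specific tensor arising from the Bochner tensor, and then invoke the classification of complete Bochner flat geometries from \cite{kam}. The proof splits naturally into two parts: a vanishing step that uses the weighted Poincar\'e machinery, and a rigidity/classification step that is essentially a citation of known structure theory.

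\textbf{Step 1: Realizing the Bochner tensor as a harmonic tensor.} First I would recall that the Bochner tensor $B$ is the K\"ahler analogue of the Weyl tensor, obtained from the full curvature tensor by subtracting the Ricci and scalar contractions. The key analytic input is the hypothesis that $B$ is divergence free. Using the second Bianchi identity together with the divergence-free condition, one shows that $B$ satisfies an elliptic equation of Lichnerowicz type, i.e. $B$ is a harmonic tensor in the sense used in Theorem~\ref{thm2}. Concretely, I would view $B$ as a $(0,k)$-tensor (for the appropriate $k$ matching its symmetry type) and verify that $\Delta_L B = 0$ with the constant $c$ specializing so that the eigenvalue threshold $\frac{4(Q-1)}{cQ^2}$ becomes $\frac{Q-1}{Q^2}$; this pins down $c=4$ for the relevant tensor type and explains the precise constant appearing in the statement.

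\textbf{Step 2: Identifying the curvature term with the eigenvalue sum.} The curvature hypothesis in Theorem~\ref{thm2} is phrased abstractly through $g(\mathfrak{R}_{|\mathfrak{u}(m)}(T^\mathfrak{u}),\overline{T}^\mathfrak{u})$. The task here is to show that, for the tensor $T$ built from $B$, this curvature pairing is controlled below by the partial sum of ordered eigenvalues $\mu_1+\ldots+\mu_{\lfloor (n+1)/2\rfloor}+\frac{1+(-1)^n}{4}\mu_{\lfloor (n+1)/2\rfloor+1}$ of the K\"ahler curvature operator $\mathfrak{R}_{|\mathfrak{u}(m)}$. This is the Petersen--Wink type eigenvalue estimate adapted to the K\"ahler setting: the pairing against a harmonic $(0,k)$-tensor is bounded below by summing the smallest eigenvalues, with the parity correction $\frac{1+(-1)^n}{4}$ accounting for whether $n$ is even or odd. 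Granting this estimate, the pointwise hypothesis $\mu_1+\ldots\geq -k\rho$ yields exactly $g(\mathfrak{R}_{|\mathfrak{u}(m)}(T^\mathfrak{u}),\overline{T}^\mathfrak{u})\geq -k\rho|T|^2$, which is the form required by Theorem~\ref{thm2} with $\kappa=k$.

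\textbf{Step 3: Vanishing and classification.} With Steps 1 and 2 in place, and the hypotheses $0\le k<\frac{Q-1}{Q^2}$ (equivalently $0\le \kappa<\frac{4(Q-1)}{cQ^2}$ with $c=4$) and finiteness of the $L^Q$-norm of $B$, Theorem~\ref{thm2} applies directly and forces $B\equiv 0$, so $(M,g)$ is Bochner flat. The remaining conclusions are then not analysis but structure theory: I would invoke Kamishima's classification in \cite{kam} of complete Bochner flat K\"ahler manifolds, which lists exactly the three geometric models (complex Euclidean, the product of complex hyperbolic and projective geometries, and the intransitive K\"ahler geometry). The final sentence about irreducibility reducing cases (1)--(2) to constant holomorphic sectional curvature, hence to $\mathbb{C}^n$, $\mathbb{CP}^n$, $\mathbb{B}^n$ or their quotients, follows from the standard fact that an irreducible Bochner flat K\"ahler manifold has constant holomorphic sectional curvature together with the classification of complex space forms.

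The main obstacle I anticipate is Step 2: establishing the sharp lower bound of the abstract curvature pairing $g(\mathfrak{R}_{|\mathfrak{u}(m)}(T^\mathfrak{u}),\overline{T}^\mathfrak{u})$ in terms of the eigenvalue partial sum, with the correct combinatorial count $\lfloor (n+1)/2\rfloor$ and the parity factor $\frac{1+(-1)^n}{4}$. This requires a careful Petersen--Wink style argument tracking which eigenvalues of the K\"ahler curvature operator genuinely contribute when pairing against a harmonic tensor of the specific symmetry type carried by the Bochner tensor, and verifying that the normalization matches the constant $c$ fixed in Step 1. The passage from vanishing to the explicit list (Step 3) I expect to be routine once \cite{kam} is cited.
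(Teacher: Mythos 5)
Your proposal follows the same overall architecture as the paper's proof --- use Proposition \ref{prop2} to make the divergence-free Bochner tensor $B$ Lichnerowicz-harmonic, convert the eigenvalue hypothesis into the curvature lower bound required by Theorem \ref{thm2}, then quote Kamishima's classification --- but the two quantitative claims on which your Steps 1 and 2 rest are both false, and the fact that the errors cancel in the final threshold does not repair the argument. First, $c$ is not a free parameter that can be ``pinned down'' to $4$: for curvature-type tensors the second-Bianchi/Weitzenb\"ock computation forces $c=\tfrac12$ (this is Example \ref{rmk1}(b), it is the constant behind Proposition \ref{prop2}, and it is how the paper treats $\Rm$ in Theorem \ref{thm:app1}); with $c=4$ the identity $\nabla^*\nabla B+c\Ric(B)=0$ for a divergence-free $B$ simply does not hold, so your Step 1 cannot be carried out as stated. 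Second, the eigenvalue hypothesis does not yield $g(\mathfrak{R}_{|\mathfrak{u}(n)}(B^\mathfrak{u}),\overline{B}^\mathfrak{u})\ge -k\rho|B|^2$ ``with $\kappa=k$''. The correct accounting, which is the actual content of the paper's proof and is missing from yours, is: since $B$ is totally trace-free, Lemma \ref{lem3} (Lemma 5.2 in \cite{PW21cre}) gives $|B^\mathfrak{u}|^2=4(n+1)|B|^2$, and combined with Lemma 2.2 in \cite{PW21} this yields
$$|LB|^2\le 8|B|^2|L|^2=\frac{2}{n+1}|B^\mathfrak{u}|^2|L|^2,$$
so Lemma \ref{lem2} applies with $C=\frac{n+1}{2}$ --- which is exactly what produces $\ell=\lfloor\frac{n+1}{2}\rfloor$ and the parity factor $\frac{1+(-1)^n}{4}=C-\ell$ that you flagged as the main obstacle --- and its conclusion reads
$$g(\mathfrak{R}_{|\mathfrak{u}(n)}(B^\mathfrak{u}),\overline{B}^\mathfrak{u})\ge-\frac{2k\rho}{n+1}|B^\mathfrak{u}|^2=-8k\rho|B|^2.$$
Hence Theorem \ref{thm2} must be applied with $\kappa=8k$ and $c=\tfrac12$, giving $8k<\frac{4(Q-1)}{(1/2)Q^2}=\frac{8(Q-1)}{Q^2}$, i.e.\ $k<\frac{Q-1}{Q^2}$. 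The threshold in the statement comes from $\kappa=8k$, $c=\tfrac12$, not from $\kappa=k$, $c=4$; your version of Step 2, taken literally, would fail.

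A smaller issue concerns the final step: ``an irreducible Bochner flat K\"ahler manifold has constant holomorphic sectional curvature'' is not a standard fact in this generality; Bochner flatness alone does not force constant holomorphic sectional curvature without an Einstein or constant-scalar-curvature hypothesis. The paper instead observes that the models in cases (1)--(2) have constant scalar curvature, invokes Theorem 1.1 of \cite{Kim09} (irreducible, Bochner flat, constant scalar curvature $\Rightarrow$ K\"ahler--Einstein), and then Theorem 1.2 of \cite{yanhui2012} to conclude constant holomorphic sectional curvature. Alternatively one can argue by inspection of Kamishima's list (a nontrivial product in case (2) is reducible, so irreducibility leaves only $m=0$ or $m=n$), but some such argument is needed; a bare appeal to a ``standard fact'' is not.
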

Combining the vanishing results as in Theorem \ref{thm2} and a classification of K\"{a}hler-Ricci solitons with vanishing Bochner tensor \cite{yanhui2012}, we obtain the following proposition.
\begin{proposition}\label{d3}
	Suppose that $(M,g)$ is a complete gradient K\"{a}hler-Ricci soliton of complex dimension $n$ satisfying that the Bochner tensor is divergence free. If $M$ satisfies a weighted Poincar\'e inequality and
	$$\mu_1+\ldots+\mu_{\lfloor\frac{n+1}{2}\rfloor}+\frac{1+(-1)^n}{4}\mu_{\lfloor\frac{n+1}{2}\rfloor+1}\geq-k\rho$$
	then the Bochner tensor vanishes provided that its $L^Q$-norm is finite and $0\leq k<\frac{Q-1}{Q^2}, Q\geq2$. Consequently, $M$ is $\mathbb{C}^n$, $\mathbb{CP}^n$, $\mathbb{B}^n$, or their quotients.
\end{proposition}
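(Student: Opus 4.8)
The plan is to prove this in two essentially independent steps: first to establish that the Bochner tensor vanishes, and then to feed that conclusion into the classification of Bochner-flat solitons in \cite{yanhui2012}. For the \emph{vanishing} step, the point is that a complete gradient K\"ahler--Ricci soliton is in particular a complete K\"ahler manifold, and the hypotheses imposed here --- a divergence-free Bochner tensor, a weighted Poincar\'e inequality, the curvature lower bound $\mu_1+\ldots+\mu_{\lfloor\frac{n+1}{2}\rfloor}+\frac{1+(-1)^n}{4}\mu_{\lfloor\frac{n+1}{2}\rfloor+1}\geq-k\rho$, finiteness of the $L^Q$-norm, and $0\leq k<\frac{Q-1}{Q^2}$ with $Q\geq2$ --- are \emph{exactly} those of Theorem~\ref{d2}, the soliton structure being irrelevant at this stage. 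The underlying mechanism is Theorem~\ref{thm2}: a divergence-free Bochner tensor is a curvature-type $(0,4)$-tensor obeying the second Bianchi identity, hence harmonic for the Lichnerowicz Laplacian, and the stated eigenvalue condition on the $\mu_i$ is precisely the explicit form of the abstract bound $g(\mathfrak{R}_{|\mathfrak{u}(m)}(T^\mathfrak{u}),\overline{T}^\mathfrak{u})\geq-\kappa\rho|T|^2$ for $T$ equal to the Bochner tensor, the constant $\frac{Q-1}{Q^2}$ reflecting the value of $c$ appropriate to this tensor. Thus Theorem~\ref{d2} applies verbatim and forces the Bochner tensor to vanish identically, so that $M$ is Bochner flat.

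For the \emph{classification} step, I would invoke the rigidity of complete gradient K\"ahler--Ricci solitons with vanishing Bochner tensor proved in \cite{yanhui2012}. Once $M$ is known to be a Bochner-flat gradient K\"ahler--Ricci soliton, that result shows $M$ has constant holomorphic sectional curvature and hence, up to passing to quotients, must be one of the complex space forms $\mathbb{C}^n$, $\mathbb{CP}^n$, or $\mathbb{B}^n$. This is the one place the soliton hypothesis is genuinely used: in Theorem~\ref{d2} the general Bochner-flat K\"ahler geometries of Kamishima \cite{kam} still admit the intransitive models of case~(3), whereas the additional soliton structure excludes these and leaves only the three space-form cases.

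The main obstacle is not the logical assembly, which is a clean concatenation of Theorem~\ref{d2} with \cite{yanhui2012}, but the verifications hidden inside the vanishing step: namely, that the divergence-free Bochner tensor genuinely qualifies as a Lichnerowicz-harmonic tensor so that the machinery of Theorem~\ref{thm2} applies, and that the condition on the $\mu_i$ transcribes correctly into the curvature-operator bound. Both are already carried out in the proof of Theorem~\ref{d2} and carry over here unchanged, so the only genuinely new ingredient beyond that theorem is the soliton-specific classification of \cite{yanhui2012}.
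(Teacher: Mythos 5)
Your proposal matches the paper's own proof: the authors likewise obtain the vanishing of the Bochner tensor by running the argument of Theorem~\ref{d2} (divergence-free Bochner tensor is Lichnerowicz-harmonic by Proposition~\ref{prop2}, the eigenvalue hypothesis feeds into Theorem~\ref{thm2}, and finiteness of the $L^Q$-norm forces $B\equiv0$), and then conclude by citing Theorem 1.2 of \cite{yanhui2012} for the classification of Bochner-flat gradient K\"ahler--Ricci solitons. Your observation that the soliton hypothesis is only used in this last step, to eliminate the intransitive models of \cite{kam}, is exactly the point of the paper's argument.
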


The analogous {theorem} of the non-compact quaternion manifolds {is} also obtained. 

\begin{theorem}\label{thm2Q}
	Let $(M,g)$ be a connected complete non-compact quaternion K\"ahler manifold. Assume that  $M$ satisfies a weighted Poincar\'e inequality with a nonnegative weight function $\rho$ with ~\eqref{eq:C1} and ~\eqref{eq:C2}, and also $g(\mathfrak{R}_{|\mathfrak{sp}(m)\oplus \mathfrak{sp}(1)}(T^\mathfrak{sp}), \overline{T}^\mathfrak{sp}) \geqslant -\kappa \rho|T|^2$ for all $(0,k)$-tensors $T$, where $\kappa\geq0$ is given. Then every harmonic tensor $T$ (with respect to the Lichnerowicz Laplacian) vanishes provided that $|T| \in L^Q(M), Q\geq2$ and $0\leqslant \kappa<\frac{4(Q-1)}{cQ^2}$. \end{theorem}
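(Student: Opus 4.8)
The plan is to reduce Theorem~\ref{thm2Q} to the same Bochner--Weitzenb\"ock analysis that underlies Theorem~\ref{thm2}, replacing the role of the unitary holonomy algebra $\mathfrak{u}(m)$ by the quaternion K\"ahler holonomy algebra $\mathfrak{sp}(m)\oplus\mathfrak{sp}(1)$. First I would recall the Weitzenb\"ock-type identity for the Lichnerowicz Laplacian: for a harmonic tensor $T$ (so $\Delta_L T=0$ with $\Delta_L=\nabla^*\nabla+c\,\Ric$), the curvature term can be expressed through the projection $\mathfrak{R}_{|\mathfrak{sp}(m)\oplus\mathfrak{sp}(1)}$ of the curvature operator onto the quaternion holonomy subalgebra acting on the associated tensor $T^\mathfrak{sp}$. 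The structural point is that for a quaternion K\"ahler manifold the holonomy reduction to $\mathrm{Sp}(m)\cdot\mathrm{Sp}(1)$ lets one decompose the curvature action exactly as in the K\"ahler case, so that the quadratic curvature form $g(\mathfrak{R}_{|\mathfrak{sp}(m)\oplus\mathfrak{sp}(1)}(T^\mathfrak{sp}),\overline{T}^\mathfrak{sp})$ appears in place of $g(\mathfrak{R}_{|\mathfrak{u}(m)}(T^\mathfrak{u}),\overline{T}^\mathfrak{u})$ and satisfies the same lower bound hypothesis $\geq -\kappa\rho|T|^2$.

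Next I would establish the Bochner formula and refined Kato inequality. Harmonicity gives the pointwise identity
\begin{equation*}
\tfrac12\Delta|T|^2=|\nabla T|^2 - c\,g(\mathfrak{R}_{|\mathfrak{sp}(m)\oplus\mathfrak{sp}(1)}(T^\mathfrak{sp}),\overline{T}^\mathfrak{sp}),
\end{equation*}
and using the curvature hypothesis this yields $\tfrac12\Delta|T|^2\geq |\nabla T|^2 - c\kappa\rho|T|^2$. Setting $u=|T|$ and invoking the (refined) Kato inequality $|\nabla T|^2\geq (1+\epsilon_0)|\nabla u|^2$ where feasible, or at minimum $|\nabla T|\geq|\nabla u|$, I would derive the differential inequality $u\,\Delta u\geq -c\kappa\rho\, u^2 + (\text{gradient terms})$ in the distributional sense, valid across the zero set of $u$ by a standard regularization. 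This is exactly the inequality that drives the argument in Theorem~\ref{thm2}, so the quaternionic case is formally identical once the curvature term is correctly identified.

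The analytic core is then a Moser-type / integral argument using the weighted Poincar\'e inequality. I would multiply the Bochner inequality by $u^{Q-2}\phi^2$ for a cutoff function $\phi$, integrate by parts, and combine with $\int_M\rho\,\psi^2\,dV\leq\int_M|\nabla\psi|^2\,dV$ applied to $\psi=u^{Q/2}\phi$. The weight $\rho$ produced by the Poincar\'e inequality absorbs the curvature term $c\kappa\rho\,u^2$ precisely because the strict inequality $\kappa<\frac{4(Q-1)}{cQ^2}$ guarantees a positive leftover coefficient after balancing the Young-inequality cross terms; the sharp constant $\frac{4(Q-1)}{Q^2}$ arises from optimizing the exponent $Q/2$ in the test function. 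The conditions~\eqref{eq:C1} and~\eqref{eq:C2} let me run the cutoff-exhaustion: nonparabolicity provides a proper exhaustion via the Green's function so the boundary/cutoff terms vanish in the limit, and $\liminf_{x\to\infty}\rho(x)>0$ together with $|T|\in L^Q$ forces $u\equiv 0$.

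The main obstacle I anticipate is purely algebraic rather than analytic: verifying that the curvature term in the quaternionic Weitzenb\"ock formula is genuinely controlled by the projection onto $\mathfrak{sp}(m)\oplus\mathfrak{sp}(1)$ in the same quadratic-form sense used in the K\"ahler setting. One must check that the decomposition of the curvature operator under the holonomy representation of $\mathrm{Sp}(m)\cdot\mathrm{Sp}(1)$ produces exactly the term $g(\mathfrak{R}_{|\mathfrak{sp}(m)\oplus\mathfrak{sp}(1)}(T^\mathfrak{sp}),\overline{T}^\mathfrak{sp})$ with the correct sign and no uncontrolled cross terms from the $\mathfrak{sp}(1)$ factor. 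Once this identification is in hand, the analytic steps transfer verbatim from the proof of Theorem~\ref{thm2}, and the vanishing conclusion follows under the stated threshold on $\kappa$.
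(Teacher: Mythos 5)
Your proposal follows essentially the same route as the paper, which proves Theorems~\ref{thm2} and~\ref{thm2Q} simultaneously: the Bochner formula for Lichnerowicz-harmonic tensors, the identification $g(\Ric(T),\overline{T})=g(\mathfrak{R}_{|\mathfrak{g}}(T^{\mathfrak{g}}),\overline{T}^{\mathfrak{g}})$ of Proposition~\ref{ric} --- which is stated for an arbitrary holonomy algebra $\mathfrak{g}$ and hence already settles the quaternionic ``obstacle'' you flag, with no uncontrolled cross terms from the $\mathfrak{sp}(1)$ factor --- followed by the weighted Poincar\'e/cutoff integration argument with threshold $\kappa<\frac{4(Q-1)}{cQ^2}$, concluding via nonparabolicity (infinite volume) that the limiting constant value of $|T|$ must be zero. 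The only slip is a sign typo: with the paper's conventions the Bochner formula reads $\frac12\Delta|T|^2=|\nabla T|^2+c\,g(\mathfrak{R}_{|\mathfrak{g}}(T^{\mathfrak{g}}),\overline{T}^{\mathfrak{g}})$ (plus, not minus), which is in fact the sign your subsequent inequality uses.
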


{The paper is organized} as follows. In Section 2, we recall some basic important facts on Bochner techniques on K\"{a}hler manifolds which are inspired by the work of Petersen and Wink. In Section 3, we derive vanishing results and prove Theorems 1.1-1.2 and Theorems 1.5-1.6. Geometric applications are introduced in Section 4. In particular, we give a proof of Theorem \ref{d2} and Proposition \ref{d3}. 
\section{Preliminaries}\label{sec0}
\subsection{Tensors}
We collect the necessary ingredients from \cite[Section 1]{PW}.  
Let $(V,g)$ be an $n$-dimensional Euclidean vector space. The metric $g$ induces a metric on $\otimes^{k}V^{*}$ and $\wedge^k V$ in the way that if $\left\{e_i \right\}_{i=1,\cdots,n}$ is an orthonormal basis for $V$, then $\left\{e_{i_1}\wedge \cdots \wedge e_{i_k} \right\}_{{1\leq i_1< \cdots<i_k\leq n }}$ is an orthonormal basis for $\wedge^k V$. 

$\wedge^2 V$ inheries a Lie algebra structure from $\mathfrak{so}(V)$, and the induced Lie algebra action on $V$ is given by
$$(X \wedge Y)Z=g(X,Z)Y-g(Y,Z)X.$$ 
In particular, for $\Xi_\alpha,\Xi_\beta\in \wedge^2 V$, $$(\Xi_\alpha)\Xi_\beta=[\Xi_\alpha,\Xi_\beta].$$
	\begin{definition}\label{def22}
	Let $V_\mathbb{C}=V\bigotimes_\mathbb{R}\mathbb{C}$. For a complex valued, $\mathbb{R}$-multilinear tensor $T$ on $V$, namely, $T\in \bigotimes^rV^*_\mathbb{C}$ and $L\in\mathfrak{so}(V)$ set 
	$$LT(X_1, \ldots, X_r)=-\sum\limits_{i=1}^rT(X_1,\ldots, LX_i, \ldots, X_r).$$
	If $\mathfrak{g}\subset\mathfrak{so}(V)$ is a Lie subalgebra, we define $T^\mathfrak{g}\in\left(\bigotimes^rV^*_\mathbb{C}\right)\bigotimes_\mathbb{R}\mathfrak{g}$
	$$g(L, T^\mathfrak{g}(X_1,\ldots, X_r))=LT(X_1,\ldots, X_r)$$
	for all $L\in\mathfrak{g}\subset\mathfrak{so}(V)=\Lambda^2V$. 
\end{definition}

A tensor $\Rm \in \otimes^4 V^{*}$ is an algebraic curvature tensor if 
$$\Rm(X,Y,Z,W)=-\Rm(Y,X,Z,W)=-\Rm(X,Y,W,Z)=\Rm(Z,W,X,Y),$$
$$\Rm(X,Y,Z,W)+\Rm(Y,Z,X,W)+\Rm(Z,X,Y,W)=0.$$
In particular, it induces the curvature operator $\mathfrak{R} : \wedge^2 V \rightarrow \wedge^2 V$ via
$$g(\mathfrak{R}(X \wedge Y),Z\wedge W)=\Rm(X,Y,Z,W).$$
The associated symmetric bilinear form is denoted by $R\in {\rm Sym^2_B}(\wedge^2 V)$. We have 
$$|\Rm|^2=4|R|^2.$$

The curvature operator $\mathfrak{R}$ of a Riemannian manifold $(M,g)$ vanishes on the complement of the holonomy algebra $\mathfrak{hol}$. In particular, it induces $\mathfrak{R}_{|\mathfrak{hol}}: \mathfrak{hol}\to\mathfrak{hol}$ and the corresponding curvature tensor $R\in \rm Sym^2_B(\mathfrak{hol})$. If $\mathfrak{hol}=\mathfrak{u}(m)$, then $(M,g)$ is K\"ahler and the operator $\mathfrak{R}_{|\mathfrak{u}(m)}: \mathfrak{u}(m)\to\mathfrak{u}(m)$ is called K\"ahler curvature operator and the associated $R\in \rm Sym^2_B(\mathfrak{u}(m))$ is the K\"ahler curvature tensor. Every K\"ahler curvature tensor on a K\"ahler manifold $(M,g,J)$ with the complex sturcture $J$ satisfies
$$\Rm(X,Y,Z,W)=\Rm(JX,JY,Z,W)=\Rm(X,Y,JZ,JW).$$
If $\mathfrak{hol}=\mathfrak{sp}(m)\oplus \mathfrak{sp}(1)$, then $(M,g)$ is quaternion K\"ahler and the operator $\mathfrak{R}_{|\mathfrak{sp}(m)\oplus \mathfrak{sp}(1)}: \mathfrak{sp}(m)\oplus \mathfrak{sp}(1)\to\mathfrak{sp}(m)\oplus \mathfrak{sp}(1)$ is called quaternion K\"ahler curvature operator and the associated $R\in \rm Sym^2_B(\mathfrak{sp}(m)\oplus \mathfrak{sp}(1))$ is the quaternion K\"ahler curvature tensor.

Moreover, if $\mathfrak{R}: \mathfrak{g}\to\mathfrak{g}$ is a self-adjoint operator with orthonormal eigenbasis $\{\Xi_\alpha\}$ and corresponding eigenvalues $\{\lambda_\alpha\}$, then 
$$\mathfrak{R}(T^\mathfrak{g})=\mathfrak{R}\circ T^\mathfrak{g}=\sum\limits_\alpha\mathfrak{R}(\Xi_\alpha)\otimes\Xi_\alpha T,$$
as a consequence, we have
$$g(\mathfrak{R}(T^\mathfrak{g}), \overline{T}^\mathfrak{g})=\sum\limits_\alpha\lambda_\alpha|\Xi_\alpha T|^2$$
and in particular,
$$|T^\mathfrak{g}|^2=\sum\limits_\alpha|\Xi_\alpha T|^2.$$
In case $\mathfrak{g}=\mathfrak{u}(n)$ and $\mathfrak{g}=\mathfrak{sp}(m)\oplus \mathfrak{sp}(1)$, we will write $T^\mathfrak{u}$ and $T^\mathfrak{sp}$ to simplify notation respectively.

\subsection{Lichnerowicz Laplacian and the Bochner technique}
Let $(M,g)$ be a $n$-dimensional Riemannian manifold and denote $R(X,Y)Z=\nabla_{Y} \nabla_{X} Z-\nabla_{X} \nabla_{Y} Z+\nabla_{[X, Y]} Z$ as $(1,3)$- Riemmanian curvature tensor. We denote by $\mathcal{T}^{(0,k)}(M)$ the vector bundle of $(0, k)$-tensors on $M$. Recall that the \textit{Weitzenb\"{o}ck curvature operator} on a tensor $T \in \mathcal{T}^{(0,k)}(M)$ is defined by
$$
\Ric(T)\left(X_{1}, \ldots, X_{k}\right)=\sum_{i=1}^{k} \sum_{j=1}^{n}\left(R\left(X_{i}, e_{j}\right) T\right)\left(X_{1}, \ldots, e_{j}, \ldots, X_{k}\right).
$$

 For $c >0$ the Lichnerowicz Laplacion is given by 
 $$
 \Delta_{L}=\nabla^{*} \nabla+c \Ric.
 $$ 
 
 A tensor $T \in \mathcal{T}^{(0,k)}(M)$ is called harmonic if $\Delta_{L} T=0$.
 \begin{example}
 	There are some important example of Lichnerowicz Laplacian for different $c>0$. 
 	\begin{itemize}
 		\item[(a)] The Hodge Laplacian is a Lichnerowicz Laplacian for $c=1$. 
 		\item[(b)]\label{rmk1} For $c=\frac{1}{2}$ the Riemmannian curvature tensor $\Rm$ is harmonic if it is divergence free. The fact is that $\Rm$ is divergence free if and only if its Ricci tensor is a Codazzi tensor, in this case its scalar curvature is constant. If the manifold is Einstein, its Ricci tensor is always Codazzi. Therefore, the curvature tensors of Einstein manifolds are harmonic. 
 \end{itemize}
 \end{example}

\begin{proposition}\label{ric}
	Let $\mathfrak{R}:\Lambda^2TM\to\Lambda^2TM$ denote the curvature operator of $(M, g)$. If $\mathfrak{g}\subset\mathfrak{so}(m)$ denotes the holonomy algebra, then $\mathfrak{R}_{|\mathfrak{g}}: \mathfrak{g}\to \mathfrak{g}, \mathfrak{R}_{|\mathfrak{g}^\perp}=0$ and 
	$$g({\rm Ric}(T), \overline{T})=g(\mathfrak{R}_{|\mathfrak{g}}(T^\mathfrak{g}), \overline{T}^\mathfrak{g})$$
	for every $T\in\bigotimes^rT^*_\mathbb{C}M$.
\end{proposition}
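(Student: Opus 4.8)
The assertion splits into a structural part, that $\mathfrak{R}$ preserves $\mathfrak{g}$ and annihilates $\mathfrak{g}^\perp$, and the Weitzenb\"ock identity $g(\Ric(T),\overline{T})=g(\mathfrak{R}_{|\mathfrak{g}}(T^\mathfrak{g}),\overline{T}^\mathfrak{g})$. The structural part is exactly the fact recalled above, that $\mathfrak{R}$ vanishes on the complement of the holonomy algebra $\mathfrak{hol}=\mathfrak{g}$: by Ambrose--Singer $\mathfrak{g}$ is generated by the curvature endomorphisms $R(X,Y)$, and the action $(X\wedge Y)Z=g(X,Z)Y-g(Y,Z)X$ together with $g(\mathfrak{R}(X\wedge Y),Z\wedge W)=\Rm(X,Y,Z,W)$ yields $R(X,Y)=\mathfrak{R}(X\wedge Y)$, so that $\operatorname{im}\mathfrak{R}\subseteq\mathfrak{g}$. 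The pair symmetry $\Rm(X,Y,Z,W)=\Rm(Z,W,X,Y)$ makes $\mathfrak{R}$ self-adjoint, whence $\mathfrak{g}^\perp\subseteq(\operatorname{im}\mathfrak{R})^\perp=\ker\mathfrak{R}$ and therefore $\mathfrak{R}_{|\mathfrak{g}^\perp}=0$. I would dispose of this in a sentence, citing the discussion preceding the statement.

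For the main identity I would pass to an orthonormal eigenbasis $\{\Xi_\alpha\}$ of $\mathfrak{R}_{|\mathfrak{g}}$ with eigenvalues $\{\lambda_\alpha\}$ and invoke the formula $g(\mathfrak{R}(T^\mathfrak{g}),\overline{T}^\mathfrak{g})=\sum_\alpha\lambda_\alpha|\Xi_\alpha T|^2$ already recorded above. It then suffices to prove the Weitzenb\"ock identity
$$g(\Ric(T),\overline{T})=\sum_\alpha\lambda_\alpha|\Xi_\alpha T|^2.$$
Since $\mathfrak{R}_{|\mathfrak{g}^\perp}=0$, I may enlarge $\{\Xi_\alpha\}$ to an orthonormal eigenbasis of $\mathfrak{R}$ on all of $\mathfrak{so}(V)=\wedge^2V$ by adjoining a basis of $\mathfrak{g}^\perp$ with eigenvalue $0$; these extra vectors do not affect the right-hand side, so every sum below may be taken over the full basis of $\mathfrak{so}(V)$.

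The crux is to rewrite the defining contraction of $\Ric(T)$ through the curvature operator. Using $R(X_i,e_j)=\mathfrak{R}(X_i\wedge e_j)$ and the derivation action $LT(X_1,\dots,X_k)=-\sum_m T(X_1,\dots,LX_m,\dots,X_k)$, I would expand $\mathfrak{R}(X_i\wedge e_j)$ in the basis $\{e_k\wedge e_l\}$ and reorganize the double sum into the operator identity
$$\Ric(T)=-\sum_{\alpha,\beta}g(\mathfrak{R}\,\Xi_\alpha,\Xi_\beta)\,\Xi_\alpha\big(\Xi_\beta T\big),$$
the overall sign being fixed by the curvature convention of the paper; in the eigenbasis this collapses to $\Ric(T)=-\sum_\alpha\lambda_\alpha\,\Xi_\alpha(\Xi_\alpha T)$. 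Pairing with $\overline{T}$ and using that each $\Xi_\alpha\in\mathfrak{so}(V)$ is real and acts skew-symmetrically on tensors, i.e. $g(\Xi_\alpha S,U)=-g(S,\Xi_\alpha U)$ and $\overline{\Xi_\alpha T}=\Xi_\alpha\overline{T}$, gives $g\big(\Xi_\alpha(\Xi_\alpha T),\overline{T}\big)=-|\Xi_\alpha T|^2$, so that $g(\Ric(T),\overline{T})=\sum_\alpha\lambda_\alpha|\Xi_\alpha T|^2=g(\mathfrak{R}_{|\mathfrak{g}}(T^\mathfrak{g}),\overline{T}^\mathfrak{g})$, as required.

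The main obstacle is the operator identity for $\Ric$ displayed above. Expanding the definition produces, beyond the diagonal contributions in which both derivations act on the same tensor slot and reassemble into Ricci contractions on each argument, a family of cross terms in which the two derivations hit distinct slots; confirming that these cross terms combine correctly requires careful index bookkeeping and an application of the first (algebraic) Bianchi identity for $\Rm$, supplemented in the K\"ahler case by the symmetries $\Rm(X,Y,Z,W)=\Rm(JX,JY,Z,W)=\Rm(X,Y,JZ,JW)$. Once this identity is in hand the remainder is formal, and the same computation applies verbatim with $\mathfrak{g}=\mathfrak{u}(m)$ or $\mathfrak{g}=\mathfrak{sp}(m)\oplus\mathfrak{sp}(1)$.
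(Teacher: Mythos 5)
The paper gives no proof of this proposition at all --- it is one of the preliminaries collected from \cite[Section~1]{PW} --- so your proposal has to be measured against the standard Petersen--Wink argument, and in its main line it reproduces that argument correctly: the structural part via Ambrose--Singer (only the containment $R(X,Y)\in\mathfrak{g}$ is needed, giving $\operatorname{im}\mathfrak{R}\subseteq\mathfrak{g}$) together with self-adjointness of $\mathfrak{R}$; the extension of an eigenbasis of $\mathfrak{R}_{|\mathfrak{g}}$ by a zero-eigenvalue basis of $\mathfrak{g}^\perp$; the operator identity $\Ric(T)=-\sum_\alpha(\mathfrak{R}\Xi_\alpha)(\Xi_\alpha T)$, whose sign is indeed the right one for this paper's conventions; and the final pairing using skew-symmetry of the derivation action and reality of $\Xi_\alpha$, which matches the formula $g(\mathfrak{R}(T^\mathfrak{g}),\overline{T}^\mathfrak{g})=\sum_\alpha\lambda_\alpha|\Xi_\alpha T|^2$ recorded in Section 2.

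The one genuine defect is your closing paragraph, where you declare the operator identity to be the ``main obstacle,'' requiring cross-term bookkeeping, the first Bianchi identity, and in the K\"ahler case the $J$-invariance of $\Rm$. None of that is needed, and deferring the key step to this phantom difficulty is the only real gap in the write-up: the identity is completed by exactly the resummation you already sketch. Concretely, $R(X_i,e_j)=\mathfrak{R}(X_i\wedge e_j)=\sum_\alpha g(X_i\wedge e_j,\mathfrak{R}\Xi_\alpha)\,\Xi_\alpha$ by self-adjointness; since the paper's normalization of the metric on $\Lambda^2V$ gives $g(L,X\wedge Y)=g(LX,Y)$, the coefficient equals $g((\mathfrak{R}\Xi_\alpha)X_i,e_j)$, so the sum over $j$ reconstitutes the vector $(\mathfrak{R}\Xi_\alpha)X_i$ in the $i$-th slot of $\Xi_\alpha T$ by linearity, and the remaining sum over $i$ is, by the paper's sign convention for the derivation action, precisely $-\bigl((\mathfrak{R}\Xi_\alpha)(\Xi_\alpha T)\bigr)(X_1,\ldots,X_k)$. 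Cross terms never arise because the derivation $\Xi_\alpha(\Xi_\beta T)$ is never expanded slot-by-slot; the only curvature symmetry used is the pair symmetry $\Rm(X,Y,Z,W)=\Rm(Z,W,X,Y)$ (i.e.\ self-adjointness of $\mathfrak{R}$), which is part of the definition of an algebraic curvature tensor; and no K\"ahler identity can be needed, consistent with the fact that the proposition is stated and holds for an arbitrary holonomy algebra $\mathfrak{g}\subset\mathfrak{so}(m)$. With your third paragraph upgraded from a sketch to this two-line computation, and the final paragraph deleted, the proof is complete and coincides with the argument the paper cites.
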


Recall the Bochner formula for a tensor $ T \in \mathcal{T}^{(0,k)}(M)$ 
$$
\Delta \frac{1}{2}|T|^{2}=|\nabla T|^{2}-g\left(\nabla^{*} \nabla T, \overline{T}\right). 
$$
Therefore, if $T$ is harmonic, then $\nabla^*\nabla T=-c\Ric T$. Hence, this togeher with Proposition \ref{ric} implies
$$
\Delta \frac{1}{2}|T|^{2}=|\nabla T|^{2}+c \cdot g(\Ric({T}), \overline{T}).
$$ 
\begin{lemma}\label{lem2}
	Let $(V, g)$ be a Euclidean vector space, $\mathfrak{g}\subset\mathfrak{so}(V)$ a Lie subalgebra and let $\mathfrak{R}: \mathfrak{g}\to\mathfrak{g}$ be self-adjoint with eigenvalues $\mu_{1} \leq \ldots \leq \mu_{{\rm dim}\mathfrak{g}}$.  Let $T \in \bigotimes^rV^*_\mathbb{C}$. Suppose there is $C \geq 1$ such that
	$$
	|L T|^{2} \leq \frac{1}{C}|{T}^\mathfrak{g}|^{2}|L|^{2}
	$$
	for all $L \in \mathfrak{g}$. Let $1 \leq \ell\leq\lfloor C\rfloor$ be an integer and let $\kappa\leq0$. 
	\begin{enumerate}
	    \item If $\mu_1+\ldots+\mu_\ell+(C-\ell)\mu_{\ell+1}\geq\kappa(\ell+1)$, then $g(\mathfrak{R}(T^\mathfrak{g}), \overline{T}^\mathfrak{g}) \geq \frac{\kappa(\ell+1)}{C}|T^\mathfrak{g}|^2$,
	    \item if $\mu_1+\ldots+\mu_\ell+(C-\ell)\mu_{\ell+1}>0$, then $g(\mathfrak{R}(T^\mathfrak{g}), \overline{T}^\mathfrak{g}) >0$, unless ${T}^\mathfrak{g}=0$.
	    	\end{enumerate}
\end{lemma}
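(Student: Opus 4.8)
The plan is to reduce the statement to a finite-dimensional extremal inequality among the eigenvalue weights. Write $S:=|T^\mathfrak{g}|^2$ and let $\{\Xi_\alpha\}$ be the orthonormal eigenbasis of $\mathfrak{R}$ with $\mathfrak{R}\Xi_\alpha=\mu_\alpha\Xi_\alpha$ and $\mu_1\le\cdots\le\mu_{\dim\mathfrak{g}}$. Using the spectral identities recorded just before the lemma, $g(\mathfrak{R}(T^\mathfrak{g}),\overline{T}^\mathfrak{g})=\sum_\alpha\mu_\alpha|\Xi_\alpha T|^2$ and $S=\sum_\alpha|\Xi_\alpha T|^2$. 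Setting $a_\alpha:=|\Xi_\alpha T|^2\ge0$, the hypothesis applied to $L=\Xi_\alpha$ (a unit vector, so $|\Xi_\alpha|^2=1$) gives the pointwise cap $a_\alpha\le S/C$. Thus it suffices to bound $\sum_\alpha\mu_\alpha a_\alpha$ from below subject to $a_\alpha\ge0$, $\sum_\alpha a_\alpha=S$, and $a_\alpha\le S/C$ --- a linear-programming problem whose minimizer loads as much mass as the cap allows onto the smallest eigenvalues.

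The key step is the master inequality
$$\sum_\alpha\mu_\alpha a_\alpha\;\ge\;\frac{S}{C}\Big(\mu_1+\cdots+\mu_\ell+(C-\ell)\mu_{\ell+1}\Big),$$
which I would prove by recentering at $\mu_{\ell+1}$. Writing $\mu_\alpha=\mu_{\ell+1}+(\mu_\alpha-\mu_{\ell+1})$ and using $\sum_\alpha a_\alpha=S$ gives $\sum_\alpha\mu_\alpha a_\alpha=\mu_{\ell+1}S+\sum_\alpha(\mu_\alpha-\mu_{\ell+1})a_\alpha$. I would split the last sum according to $\alpha\le\ell$, $\alpha=\ell+1$, $\alpha\ge\ell+2$. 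The term $\alpha=\ell+1$ vanishes; every term with $\alpha\ge\ell+2$ has $\mu_\alpha-\mu_{\ell+1}\ge0$ and $a_\alpha\ge0$, hence is nonnegative and may be discarded. For $\alpha\le\ell$ the coefficient $\mu_\alpha-\mu_{\ell+1}$ is nonpositive, so the cap $a_\alpha\le S/C$ yields $(\mu_\alpha-\mu_{\ell+1})a_\alpha\ge(\mu_\alpha-\mu_{\ell+1})\frac{S}{C}$. Summing these estimates and simplifying via $C\mu_{\ell+1}-\ell\mu_{\ell+1}=(C-\ell)\mu_{\ell+1}$ produces exactly the right-hand side above. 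Here the constraint $1\le\ell\le\lfloor C\rfloor$ enters only to ensure $C-\ell\ge0$ (and, taking $\ell=\lfloor C\rfloor$, the sharpest bound), while the ordering of the $\mu_\alpha$ is what makes the discarded tail nonnegative.

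Both conclusions then fall out immediately. For (1), the hypothesis $\mu_1+\cdots+\mu_\ell+(C-\ell)\mu_{\ell+1}\ge\kappa(\ell+1)$ combined with the nonnegativity of the prefactor $S/C$ turns the master inequality into $g(\mathfrak{R}(T^\mathfrak{g}),\overline{T}^\mathfrak{g})\ge\frac{\kappa(\ell+1)}{C}S$; multiplying by $S/C\ge0$ preserves the inequality even though $\kappa\le0$. For (2), if the bracket is strictly positive and $T^\mathfrak{g}\ne0$ (so $S>0$), the master inequality forces $g(\mathfrak{R}(T^\mathfrak{g}),\overline{T}^\mathfrak{g})>0$ directly.

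I expect the only genuine subtlety --- more bookkeeping than depth --- to be the sign analysis in the recentering step: keeping straight that the $\alpha\le\ell$ terms must be bounded using the \emph{upper} cap $a_\alpha\le S/C$ (because their coefficients are nonpositive), while the $\alpha\ge\ell+2$ terms are simply dropped. The degenerate cases where $C$ is an integer or $\ell+1$ reaches the top of the spectrum are handled directly, since there the factor $(C-\ell)$ vanishes and $\mu_{\ell+1}$ is multiplied by zero, so no out-of-range eigenvalue actually appears in the final bound. Everything else is the spectral bookkeeping already supplied in Section~\ref{sec0}.
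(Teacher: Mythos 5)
Your proof is correct: the reduction to the linear-programming bound via the spectral identities $g(\mathfrak{R}(T^\mathfrak{g}),\overline{T}^\mathfrak{g})=\sum_\alpha\mu_\alpha|\Xi_\alpha T|^2$, $|T^\mathfrak{g}|^2=\sum_\alpha|\Xi_\alpha T|^2$, the cap $|\Xi_\alpha T|^2\leq\frac{1}{C}|T^\mathfrak{g}|^2$, and the recentering at $\mu_{\ell+1}$ is exactly the argument behind this lemma. The paper itself gives no proof but defers to Lemma 1.8 of Petersen--Wink \cite{PW21}, whose proof proceeds by this same decomposition and estimate, so your attempt is essentially a faithful reconstruction of the cited argument.
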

\begin{proof} See \cite[Lemma~1.8]{PW21}.
\end{proof}

Let $V=\mathbb{C}^n$ and consider the natural $U(n)$-action on $V$. Denote by 
$$\Lambda^{p,0}V^*=\Lambda^pV^*={\rm span}_{\mathbb{C}}\{dz^{i_1}\wedge\ldots\wedge dz^{i_p}| 1\leq i_1<\ldots< i_p\leq n\}$$
the space of complex linear $p$-forms, by
$$\Lambda^{0,q}V^*=\Lambda^q\overline{V^*}={\rm span}_{\mathbb{C}}\{d\bar{z}^{i_1}\wedge\ldots\wedge d\bar{z}^{i_q}| 1\leq i_1<\ldots< i_q\leq n\}$$
the space of conjugate linear $q$-form, and by
$$\Lambda^{p,q}V^*=\Lambda^{p,0}V^*\otimes_\mathbb{C}\Lambda^{0,q}V^*$$
the space of $(p,q)$-forms.

For $0\leq k\leq\min\{p,q\}$ set
$$V^{p,q}_k=\Lambda^{p-k,0}V^*\otimes_\mathbb{C}{\rm span}\{\Omega^k\}\otimes_\mathbb{C}\Lambda^{0, q-k}V^*,$$
where $\Omega$ is the K\"{a}hler form.
\begin{definition}\label{def25}
For $\varphi\in\Lambda^{p,q}V^*$, set
$$\overset{\circ}{\varphi}=\begin{cases}\varphi-\frac{g(\varphi, \Omega^p)}{|\Omega^p|}\Omega^p&\text{ if }p=q\\
\varphi&\text{ if }p\not=q
\end{cases}$$
\end{definition}
The following proposition calculate $\varphi^{\mathfrak{u}}$ in term of $\overset{\circ}{\varphi}$. 
\begin{proposition}\label{p32}
	Let $k\leq\min\{p, q\}$ and $\varphi\in\Lambda^{p,q}_kV^*$. We have
	$$|\varphi^{\mathfrak{u}}|=[2(p-k)(q-k)+(p+q-2k)((n+1)-(p+q-2k))]|\overset{\circ}{\varphi}|^2.$$
\end{proposition}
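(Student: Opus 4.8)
The plan is to start from the algebraic identity $|\varphi^{\mathfrak u}|^2=\sum_\alpha|\Xi_\alpha\varphi|^2$ recorded in Section~\ref{sec0}, valid for any orthonormal basis $\{\Xi_\alpha\}$ of $\mathfrak u(n)$. The right-hand side is the quadratic form of the Casimir operator $\sum_\alpha\Xi_\alpha\circ\Xi_\alpha$, which is $U(n)$-equivariant; hence by Schur's lemma it acts by a single scalar on each irreducible $U(n)$-summand of $\Lambda^{p,q}V^*$, and the assertion amounts to identifying that scalar on $\Lambda^{p,q}_kV^*$. Since the K\"ahler form $\Omega$ is $U(n)$-invariant, $L\Omega=0$ for every $L\in\mathfrak u(n)$, so $L(\Omega^k\wedge\psi)=\Omega^k\wedge L\psi$ and multiplication by $\Omega^k$ intertwines the $\mathfrak u(n)$-action. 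Consequently the scalar on $\Lambda^{p,q}_kV^*$ coincides with the one on the primitive $(p-k,q-k)$-forms, while the invariant line $\mathbb C\,\Omega^p$ (on which the Casimir vanishes) is exactly the direction removed by passing from $\varphi$ to $\overset{\circ}{\varphi}$ when $p=q$. It therefore suffices to evaluate $\sum_\alpha|\Xi_\alpha\varphi_0|^2$ on one convenient primitive representative $\varphi_0$.

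For the computation I would fix a unitary frame $e_1,\dots,e_n,Je_1,\dots,Je_n$ and use the $J$-adapted orthonormal basis of $\mathfrak u(n)\subset\Lambda^2V$ given by $A_i=e_i\wedge Je_i$ for $1\le i\le n$, together with $B_{ij}=\tfrac1{\sqrt2}\paren{e_i\wedge e_j+Je_i\wedge Je_j}$ and $C_{ij}=\tfrac1{\sqrt2}\paren{e_i\wedge Je_j-Je_i\wedge e_j}$ for $1\le i<j\le n$; these are $n+2\binom n2=n^2$ vectors, each of unit length in the metric of $\Lambda^2V$. A direct check on the dual coframe $dz^i,d\bar z^i$ shows that $A_i$ acts diagonally, $A_i\,dz^j=-\sqrt{-1}\,\delta_{ij}\,dz^j$ and $A_i\,d\bar z^j=\sqrt{-1}\,\delta_{ij}\,d\bar z^j$, whereas $B_{ij}$ and $C_{ij}$ interchange the $i$- and $j$-slots within the holomorphic factors and within the antiholomorphic factors (never mixing the two), always carrying the prefactor $\tfrac1{\sqrt2}$. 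As representative I would take $\varphi_0=dz^1\wedge\cdots\wedge dz^{p-k}\wedge d\bar z^{p-k+1}\wedge\cdots\wedge d\bar z^{p+q-2k}$, whose holomorphic and antiholomorphic index sets $I$ and $J$ are disjoint, so that $\varphi_0$ is primitive and $\overset{\circ}{\varphi_0}=\varphi_0$.

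The sum then organizes itself by the type of the index pair. The diagonal part gives $\sum_i|A_i\varphi_0|^2=\lvert I\triangle J\rvert\,|\varphi_0|^2=(p+q-2k)|\varphi_0|^2$. For the off-diagonal generators, a pair with both indices in $I$, or both in $J$, contributes nothing, because the replaced factor duplicates one already present and the wedge vanishes; a pair with one index in $I$ and one in $J$ contributes $|B_{ij}\varphi_0|^2+|C_{ij}\varphi_0|^2=2|\varphi_0|^2$, and there are $(p-k)(q-k)$ such pairs; and a pair with exactly one index in $I\cup J$ and the other outside contributes $|\varphi_0|^2$, with $(p+q-2k)\paren{n-(p+q-2k)}$ such pairs. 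Here the factor $\tfrac1{\sqrt2}$ in $B_{ij},C_{ij}$ is exactly what produces the correct weights. Adding the three contributions gives $\sum_\alpha|\Xi_\alpha\varphi_0|^2=\big[(p+q-2k)+2(p-k)(q-k)+(p+q-2k)(n-(p+q-2k))\big]|\varphi_0|^2$, and collecting the $(p+q-2k)$ terms turns the bracket into $2(p-k)(q-k)+(p+q-2k)\big((n+1)-(p+q-2k)\big)$, as claimed.

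The main obstacle is organizational rather than conceptual. One must lay out the index-pair cases cleanly and, above all, keep the $\tfrac1{\sqrt2}$ normalizations so that $\{A_i,B_{ij},C_{ij}\}$ is genuinely orthonormal in $\Lambda^2V$ --- an error here rescales the off-diagonal contributions and destroys the identity. One must also justify the reduction to a single representative, namely that $\Omega^k\wedge\,\cdot\,$ identifies $\Lambda^{p,q}_kV^*$ with the irreducible module of primitive $(p-k,q-k)$-forms, so that the $U(n)$-equivariant operator $\sum_\alpha\Xi_\alpha\circ\Xi_\alpha$ is indeed scalar there; the borderline case $p=q$, in which the trivial summand $\mathbb C\,\Omega^p$ must be discarded, is precisely what the operation $\varphi\mapsto\overset{\circ}{\varphi}$ accomplishes.
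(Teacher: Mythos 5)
Your proof is correct, but there is essentially no argument in the paper to measure it against: Proposition \ref{p32} is stated without proof, being imported from Petersen--Wink \cite{PW21}, and the proof there follows the same two-step scheme you propose --- irreducibility of the Lefschetz summands of $\Lambda^{p,q}V^*$ under $U(n)$ together with an evaluation of the resulting Casimir constant --- so what you have written is in effect a reconstruction of the missing argument. The details check out: $|\varphi^{\mathfrak u}|^2=\sum_\alpha|\Xi_\alpha\varphi|^2$ is independent of the chosen orthonormal basis of $\mathfrak u(n)$; your frame $\{A_i, B_{ij}, C_{ij}\}$ is indeed orthonormal in $\Lambda^2V$ and spans $\mathfrak{u}(n)$; the actions $A_i\,dz^j=-\sqrt{-1}\,\delta_{ij}\,dz^j$ and the $\tfrac{1}{\sqrt2}$-weighted index swaps for $B_{ij}, C_{ij}$ are as you say; and the three contributions $(p+q-2k)$, $2(p-k)(q-k)$ and $(p+q-2k)\left(n-(p+q-2k)\right)$ sum to the stated bracket (the left-hand side of Proposition \ref{p32} should of course read $|\varphi^{\mathfrak u}|^2$). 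Two points deserve emphasis. First, your Schur step uses that the metric on $\mathfrak u(n)$ inherited from $\Lambda^2V$ is ${\rm Ad}$-invariant (true, since the $\Lambda^2V$-metric is $O(2n)$-invariant) and that primitive $(a,b)$-forms form an irreducible complex $U(n)$-module; this last fact, which you flag but do not prove, is genuinely indispensable --- your disjoint-index monomials do not span the primitive module, so without irreducibility the evaluation on a single representative would prove nothing --- and it is precisely the classical input (going back to Weil) that \cite{PW21} also quotes rather than proves. Second, your reading of $\Lambda^{p,q}_kV^*$ as $\Omega^k\wedge\{\text{primitive }(p-k,q-k)\text{-forms}\}$ is the correct one and is in fact forced: the space $V^{p,q}_k$ defined in Section \ref{sec0} is strictly larger (it contains all higher Lefschetz levels), and no single constant can serve on it, since for instance with $p=q=2$ the primitive level carries the constant $4(n-1)$ while $\Omega\wedge\{\text{primitive }(1,1)\}$ carries $2n$. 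So your proof, besides being correct, implicitly repairs an imprecision in the paper's statement of the proposition.
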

The next proposition allows us to estimate $|LT|^2$ for various types of tensors
\begin{proposition}\label{prop0}
    Suppose that $\varphi\in V^{p,q}_k$. It follows that
    $$|L\varphi|^2\leq(p+q-2k)|L|^2|\overset{\circ}{\varphi}|^2$$
    for all $L\in\mathfrak{u}(V)$.
\end{proposition}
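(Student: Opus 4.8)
The plan is to diagonalize $L$ and reduce the estimate to an elementary Cauchy--Schwarz bound on the eigenvalues of $L$ acting on monomial $(p,q)$-forms. Since $L\in\mathfrak{u}(V)$ is skew-Hermitian, I would first choose a unitary coframe $\{dw^1,\dots,dw^n\}$ of $\Lambda^{1,0}V^*$ in which $L$ is diagonal, say $L\,dw^j=-\sqrt{-1}\,\mu_j\,dw^j$ and $L\,d\bar w^j=\sqrt{-1}\,\mu_j\,d\bar w^j$ with $\mu_j\in\RR$. With the convention that the decomposable $2$-vectors $e_i\wedge e_j$ are orthonormal in $\Lambda^2 V$, a direct computation identifies $L$ with $\sum_j\mu_j\,(e_j\wedge Je_j)$, so that $|L|^2=\sum_j\mu_j^2$.

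Next I would use that $L$ acts as a derivation, so on a monomial $dw^I\wedge d\bar w^J$, with $I,J$ ordered multi-indices of sizes $p$ and $q$, it acts diagonally with eigenvalue $\sqrt{-1}\,c_{IJ}$, where $c_{IJ}=\sum_{b\in J}\mu_b-\sum_{a\in I}\mu_a$. Writing $c_{IJ}=\sum_j\varepsilon_j\mu_j$ with $\varepsilon_j\in\{-1,0,1\}$, the indices $j\in I\cap J$ cancel, so the number of nonzero coefficients is exactly $|I\triangle J|=p+q-2|I\cap J|$. Cauchy--Schwarz then yields the pointwise bound
$$c_{IJ}^2\le(p+q-2|I\cap J|)\sum_j\mu_j^2=(p+q-2|I\cap J|)\,|L|^2.$$

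The geometric input enters through the factor $\Omega^k$. Because $\Omega$ is $U(n)$-invariant, in any unitary coframe one has $\Omega=\sqrt{-1}\sum_j dw^j\wedge d\bar w^j$, so every monomial occurring in $\Omega^k$ carries $k$ repeated indices. Hence if $\varphi=\sum_\ell\alpha_\ell\wedge\Omega^k\wedge\beta_\ell\in V^{p,q}_k$ with $\alpha_\ell\in\Lambda^{p-k,0}V^*$ and $\beta_\ell\in\Lambda^{0,q-k}V^*$, then every monomial $dw^I\wedge d\bar w^J$ appearing in $\varphi$ satisfies $|I\cap J|\ge k$, and therefore $c_{IJ}^2\le(p+q-2k)|L|^2$ on the support of $\varphi$. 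Since distinct monomials are orthogonal and $L$ is diagonal, expanding $\varphi=\sum\varphi_{IJ}\,dw^I\wedge d\bar w^J$ gives
$$|L\varphi|^2=\sum_{I,J}c_{IJ}^2\,|\varphi_{IJ}|^2\,|dw^I\wedge d\bar w^J|^2\le(p+q-2k)\,|L|^2\sum_{I,J}|\varphi_{IJ}|^2\,|dw^I\wedge d\bar w^J|^2=(p+q-2k)\,|L|^2\,|\varphi|^2,$$
which already proves the claim when $p\neq q$, since there $\overset{\circ}{\varphi}=\varphi$. Note that the normalization of the monomial basis is irrelevant here, as it cancels between the two sides.

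Finally, to pass from $|\varphi|$ to $|\overset{\circ}{\varphi}|$ in the remaining case $p=q$, I would exploit $U(n)$-invariance once more: $L\Omega^p=0$, so $L\varphi=L\overset{\circ}{\varphi}$, while the monomials of $\Omega^p$ all have $|I\cap J|=p\ge k$, so subtracting the $\Omega^p$-component of $\varphi$ does not create monomials with $|I\cap J|<k$. Repeating the previous estimate verbatim with $\overset{\circ}{\varphi}$ in place of $\varphi$ then gives $|L\varphi|^2=|L\overset{\circ}{\varphi}|^2\le(p+q-2k)|L|^2|\overset{\circ}{\varphi}|^2$. The step I expect to be the real content is the bookkeeping guaranteeing $|I\cap J|\ge k$ for every monomial of $\varphi$ in the eigenframe of $L$; this is precisely where the $U(n)$-invariance of $\Omega$, hence the frame-independence of the $\Omega^k$-structure, is essential.
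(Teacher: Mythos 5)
Your proof is correct: the unitary diagonalization of the skew-Hermitian $L$, the derivation action on monomials $dw^I\wedge d\bar w^J$, the Cauchy--Schwarz bound after cancelling the indices in $I\cap J$ (with $|I\cap J|\ge k$ forced by the $\Omega^k$ factor), and the use of $L\Omega^p=0$ to reduce to $\overset{\circ}{\varphi}$ when $p=q$ are all sound. Note that the paper itself gives no proof of Proposition~\ref{prop0}---it only cites \cite[Proposition~3.4]{PW21}---and your argument is essentially the same eigenvalue-counting proof as in that reference, so you have in effect supplied the proof the paper omits.
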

	
\begin{proof}
	See \cite[Proposition~3.4]{PW21}. 
\end{proof}
For $k\leq\min\{p,q\}$, if $p+q-2k\not=0$, let $$C^{p,q}_k=n+1-(p+q)+2\frac{pq-k^2}{p+q-2k},$$
 and 
 $$C^{p,q}=n+1-\frac{p^2+q^2}{p+q}.$$
 Here, we can assume $p+q\leq n$ due to Serre duality.
\begin{proposition}\label{prop1}
	Let $k\leq\min\{p,q\}$ with $p+q-2k>0$. Let $\kappa\leq0$ and let $\mathfrak{R}:\mathfrak{u}(V)\to\mathfrak{u}(V)$ be a K\"{a}hler curvature operator with eigenvalues $\mu_1\leq\ldots\leq\mu_{n^2}$. Let $\varphi\in\Lambda^{p,q}_kV^*$.
	\begin{enumerate}
	    \item If $\mu_1+\ldots+\mu_{\lfloor C^{p,q}_k\rfloor}+(C^{p,q}_k-\lfloor C^{p,q}_k\rfloor)\mu_{\lfloor C^{p,q}_k\rfloor+1}\geq \kappa(\lfloor C^{p,q}_k)\rfloor+1)$ then
	    $$g(\mathfrak{R}(\varphi^\mathfrak{u}), \overline{\varphi}^\mathfrak{u})\geq \kappa (\lfloor C^{p,q}_k\rfloor+1)(p+q-2k)|\overset{\circ}{\varphi}|^2.$$
	    \item If $\mu_1+\ldots+\mu_{\lfloor C^{p,q}_k\rfloor}+(C^{p,q}_k-\lfloor C^{p,q}_k\rfloor)\mu_{\lfloor C^{p,q}_k\rfloor+1}>0$ then
	    $$g(\mathfrak{R}(\varphi^\mathfrak{u}), \overline{\varphi}^\mathfrak{u})>0$$
	    unless $\varphi=0$.
	    \item If $\mu_1+\ldots+\mu_{\lfloor C^{p,q}\rfloor}+(C^{p,q}-\lfloor C^{p,q}\rfloor)\mu_{\lfloor C^{p,q}\rfloor+1}\geq \kappa(\lfloor C^{p,q})\rfloor+1)$ then
	    $$g(\mathfrak{R}(\varphi^\mathfrak{u}), \overline{\varphi}^\mathfrak{u})\geq \kappa (n+2-|p-q|)(p+q)|\overset{\circ}{\varphi}|^2.$$
	\end{enumerate}
\end{proposition}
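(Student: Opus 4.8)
\noindent
The plan is to reduce all three parts to the abstract eigenvalue estimate of Lemma \ref{lem2}, applied to the Lie subalgebra $\mathfrak{g}=\mathfrak{u}(V)$. The only nontrivial input is the sharp constant $C$ for which the pinching hypothesis $|L\varphi|^2\leq \frac{1}{C}|\varphi^\mathfrak{u}|^2|L|^2$ of Lemma \ref{lem2} holds for all $L\in\mathfrak{u}(V)$, and I would extract it by combining the two preceding propositions: Proposition \ref{prop0} gives $|L\varphi|^2\leq (p+q-2k)|L|^2|\overset{\circ}{\varphi}|^2$, while Proposition \ref{p32} evaluates $|\varphi^\mathfrak{u}|^2=[2(p-k)(q-k)+(p+q-2k)((n+1)-(p+q-2k))]|\overset{\circ}{\varphi}|^2$. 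Dividing the bracket by $p+q-2k$ and simplifying the resulting rational expression should yield exactly $C^{p,q}_k$, i.e. $|\varphi^\mathfrak{u}|^2=C^{p,q}_k\,(p+q-2k)\,|\overset{\circ}{\varphi}|^2$; this is the algebraic identity on which everything rests.

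For parts (1) and (2) I would feed $C=C^{p,q}_k$ and the integer $\ell=\lfloor C^{p,q}_k\rfloor$ directly into Lemma \ref{lem2}. Its hypothesis becomes verbatim the eigenvalue condition in the statement, and its conclusion reads $g(\mathfrak{R}(\varphi^\mathfrak{u}),\overline{\varphi}^\mathfrak{u})\geq \frac{\kappa(\ell+1)}{C^{p,q}_k}|\varphi^\mathfrak{u}|^2$. Substituting $|\varphi^\mathfrak{u}|^2=C^{p,q}_k(p+q-2k)|\overset{\circ}{\varphi}|^2$ cancels the factor $C^{p,q}_k$ and leaves precisely $\kappa(\lfloor C^{p,q}_k\rfloor+1)(p+q-2k)|\overset{\circ}{\varphi}|^2$, which is (1). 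For (2) the strict form of Lemma \ref{lem2} yields $g(\mathfrak{R}(\varphi^\mathfrak{u}),\overline{\varphi}^\mathfrak{u})>0$ unless $\varphi^\mathfrak{u}=0$; since $C^{p,q}_k(p+q-2k)>0$, the vanishing $\varphi^\mathfrak{u}=0$ forces $\overset{\circ}{\varphi}=0$, which gives $\varphi=0$ directly when $p\neq q$ (and in the case $p=q$ uses that a multiple of $\Omega^p$ lies in a different summand than $\varphi\in\Lambda^{p,q}_kV^*$ with $p+q-2k>0$).

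For part (3) the idea is that the coarser constant $C^{p,q}$ is still admissible. Since $\frac{pq-k^2}{p+q-2k}$ is nondecreasing in $k$ on $0\le k\le\min\{p,q\}$, one has $C^{p,q}=C^{p,q}_0\le C^{p,q}_k$, so $\frac{1}{C^{p,q}}|\varphi^\mathfrak{u}|^2\ge (p+q-2k)|\overset{\circ}{\varphi}|^2\ge |L\varphi|^2/|L|^2$, and Lemma \ref{lem2} applies with $C=C^{p,q}$ and $\ell=\lfloor C^{p,q}\rfloor$; its hypothesis is exactly the stated one. The output $g(\mathfrak{R}(\varphi^\mathfrak{u}),\overline{\varphi}^\mathfrak{u})\ge \frac{\kappa(\lfloor C^{p,q}\rfloor+1)}{C^{p,q}}|\varphi^\mathfrak{u}|^2$ must then be weakened into the clean form. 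Using $\kappa\le 0$, this requires two elementary inequalities: first $C^{p,q}_k(p+q-2k)\le C^{p,q}(p+q)$, whose difference computes to $2k\big((p+q-k)-(n+1)\big)\le 0$ because $p+q\le n$; and second $\lfloor C^{p,q}\rfloor+1\le n+2-|p-q|$, which follows from the rewriting $C^{p,q}=n+1-|p-q|-\frac{2\min\{p,q\}^2}{p+q}$. Chaining these, and recalling that multiplication by $\kappa\le 0$ reverses each inequality, turns the bound into $\kappa(n+2-|p-q|)(p+q)|\overset{\circ}{\varphi}|^2$.

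The algebraic identifications are routine; the step demanding genuine care is part (3), where I must track the sign of $\kappa$ through every weakening and verify the two comparison inequalities $C^{p,q}_k(p+q-2k)\le C^{p,q}(p+q)$ and $\lfloor C^{p,q}\rfloor+1\le n+2-|p-q|$ under the standing assumption $p+q\le n$. This sign-sensitive bookkeeping, rather than any conceptual difficulty, is the main obstacle.
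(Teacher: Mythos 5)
Your proposal is correct and follows the same route as the paper's source: the paper itself offers no independent proof but cites Petersen--Wink (Proposition 3.6 and Corollary 3.7 of \cite{PW21}), and your argument is a faithful reconstruction of exactly that proof from the paper's own preliminaries, namely Lemma \ref{lem2} with $\mathfrak{g}=\mathfrak{u}(V)$, Proposition \ref{p32}, and Proposition \ref{prop0}, glued together by the (correct) identity $|\varphi^\mathfrak{u}|^2=C^{p,q}_k\,(p+q-2k)\,|\overset{\circ}{\varphi}|^2$. The delicate points all check out: $C^{p,q}=C^{p,q}_0\le C^{p,q}_k$ since $\frac{pq-k^2}{p+q-2k}$ is nondecreasing in $k$, the difference $C^{p,q}_k(p+q-2k)-C^{p,q}(p+q)=2k\bigl((p+q-k)-(n+1)\bigr)\le 0$ under $p+q\le n$, the bound $\lfloor C^{p,q}\rfloor+1\le n+2-|p-q|$ follows from your rewriting of $C^{p,q}$, and the reversal of inequalities upon multiplication by $\kappa\le 0$ is tracked correctly throughout part (3).
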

\begin{proof}
	See \cite[Proposition~3.6]{PW21} and \cite[Corollary~3.7]{PW21}.
\end{proof}
We note that, if $\mu_1+\ldots+\mu_{\lfloor C^{p,q}\rfloor}+(C^{p,q}-\lfloor C^{p,q}\rfloor)\mu_{\lfloor C^{p,q}\rfloor+1}\geq \kappa(\lfloor C^{p,q})\rfloor+1)$, then by \cite{PW21} (see the Proof of Theorem B-D), we have
$$g({\rm Ric}\varphi, \overline{\varphi})\geq \kappa (n+2-|p-q|)(p+q)|\overset{\circ}{\varphi}|^2.$$
Combining all above discussion, we have the following Bochner formula:
\begin{lemma}\label{lem1} Let $(M, g)$ be a complete non-compact K\"{a}hler manifold of complex dimension $n$, let $\kappa\leq0$. Suppose that $\varphi$ is a harmonic $(p,q)$-forms. If 
    $$\mu_1+\ldots+\mu_{\lfloor C^{p,q}\rfloor}+(C^{p,q}-\lfloor C^{p,q}\rfloor)\mu_{\lfloor C^{p,q}\rfloor+1}\geq \kappa(\lfloor C^{p,q})\rfloor+1),$$
    then we have
$$\Delta\frac{1}{2}|\varphi|^2\geq|\nabla\varphi|^2+\kappa (n+2-|p-q|)(p+q)|\overset{\circ}{\varphi}|^2.$$
\end{lemma}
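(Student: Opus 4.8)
The plan is to read this off directly from the Bochner identity of Subsection~\ref{sec0} together with the curvature estimate recorded immediately before the statement, so the argument is essentially a one-line combination once the right specialization is made. First I would observe that a harmonic $(p,q)$-form is harmonic for the Hodge Laplacian, i.e.\ for the Lichnerowicz Laplacian with $c=1$; this is the only point where a little care is needed, since it guarantees that no factor of $c$ survives in front of the curvature term. With $c=1$ the harmonicity condition $\Delta_{L}\varphi=0$ gives $\nabla^{*}\nabla\varphi=-\Ric\varphi$, and the Bochner formula specializes to the identity
$$\Delta\tfrac{1}{2}|\varphi|^2=|\nabla\varphi|^2+g(\Ric\varphi,\overline{\varphi}).$$

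Next I would invoke the lower bound displayed just above the lemma, which is obtained from Proposition~\ref{prop1}(3) by passing from the K\"ahler curvature operator $\mathfrak{R}_{|\mathfrak{u}(m)}$ to the Weitzenb\"ock curvature $\Ric$ via Proposition~\ref{ric}. Under the eigenvalue hypothesis
$$\mu_1+\ldots+\mu_{\lfloor C^{p,q}\rfloor}+(C^{p,q}-\lfloor C^{p,q}\rfloor)\mu_{\lfloor C^{p,q}\rfloor+1}\geq\kappa(\lfloor C^{p,q}\rfloor+1),$$
that estimate reads
$$g(\Ric\varphi,\overline{\varphi})\geq\kappa\,(n+2-|p-q|)(p+q)\,|\overset{\circ}{\varphi}|^2.$$
Substituting this into the Bochner identity immediately yields
$$\Delta\tfrac{1}{2}|\varphi|^2\geq|\nabla\varphi|^2+\kappa\,(n+2-|p-q|)(p+q)\,|\overset{\circ}{\varphi}|^2,$$
which is precisely the asserted inequality.

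I do not expect any genuine obstacle here: all the substantive work---bounding $|L\varphi|^2$ by a multiple of $|L|^2|\overset{\circ}{\varphi}|^2$ (Proposition~\ref{prop0}), computing the structural constants $C^{p,q}_k$ and $C^{p,q}$, and extracting the eigenvalue inequality through Lemma~\ref{lem2}---has already been carried out in the preceding propositions, so the lemma merely packages that curvature estimate into the Bochner formula. The one thing worth double-checking is a sign/positivity bookkeeping: since $\kappa\leq0$ and, under the standing assumption $p+q\leq n$, the coefficient $(n+2-|p-q|)(p+q)$ is nonnegative, the extra term is a nonpositive correction, so the formula correctly exhibits $\Delta\tfrac{1}{2}|\varphi|^2-|\nabla\varphi|^2$ as bounded below by a controlled negative multiple of $|\overset{\circ}{\varphi}|^2$, exactly the shape needed for the weighted-Poincar\'e arguments in Section~3.
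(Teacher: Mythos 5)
Your proposal is correct and follows essentially the same route as the paper: the paper's own justification is precisely the Bochner identity $\Delta\tfrac{1}{2}|\varphi|^2=|\nabla\varphi|^2+c\,g(\Ric\varphi,\overline{\varphi})$ for Lichnerowicz-harmonic tensors combined with the curvature lower bound $g(\Ric\varphi,\overline{\varphi})\geq\kappa(n+2-|p-q|)(p+q)|\overset{\circ}{\varphi}|^2$ stated immediately before the lemma (which, as you note, comes from Proposition \ref{prop1} via Proposition \ref{ric}). Your explicit remark that $c=1$ for the Hodge Laplacian, so no factor of $c$ survives in the curvature term, is a point the paper leaves implicit, and it is handled correctly.
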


\begin{lemma}\label{lem3}
Every algebraic K\"ahler curvature tensor $R\in Sym^2_{B}(\mathfrak{u}(n))$ satisfies
	    $$|\mathfrak{R}^\mathfrak{u}|^2= 4(n+1) |\overset{\circ}{ {R}}|^2-4|\overset{\circ}{{\rm Ric}}|^2.$$
In particular, $|\mathfrak{R}^\mathfrak{u}|^2=0$ if and only if $R$ has constant holomorphic sectional curvature. 
\begin{proof}
	\cite[Lemma 5.2]{PW}. 
\end{proof}
\end{lemma}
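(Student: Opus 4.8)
The identity is pointwise and purely algebraic, so the plan is to fix a Hermitian vector space $V=\mathbb{C}^n$ with its standard ${\rm U}(n)$-action and to prove it for every algebraic K\"ahler curvature tensor $R\in{\rm Sym}^2_B(\mathfrak{u}(n))$. The guiding observation is that, by the formulas recorded in Section 2, $|\mathfrak{R}^\mathfrak{u}|^2=\sum_\alpha|\Xi_\alpha\Rm|^2$, where $\{\Xi_\alpha\}$ is an orthonormal basis of $\mathfrak{u}(n)$ acting on the $(0,4)$-tensor $\Rm$; in particular $R\mapsto|\mathfrak{R}^\mathfrak{u}|^2$ is the ${\rm U}(n)$-invariant quadratic form attached to the Casimir of $\mathfrak{u}(n)$. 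Both sides of the claimed equation are therefore ${\rm U}(n)$-invariant quadratic forms in $R$, and it suffices to check that they agree on each irreducible summand of the space of curvature tensors.

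First I would recall the orthogonal ${\rm U}(n)$-decomposition ${\rm Sym}^2_B(\mathfrak{u}(n))=\mathcal{K}_1\oplus\mathcal{K}_2\oplus\mathcal{K}_3$ into pairwise inequivalent irreducibles: the line $\mathcal{K}_1$ spanned by the constant holomorphic sectional curvature model $R_0$, the primitive-Ricci part $\mathcal{K}_2$ (isomorphic to trace-free Hermitian endomorphisms), and the Bochner part $\mathcal{K}_3$. Writing $R=R_1+R_2+R_3$, the trace-free curvature is $\overset{\circ}{R}=R_2+R_3$, the trace-free Ricci $\overset{\circ}{{\rm Ric}}$ is recovered solely from $R_2$, and the Bochner tensor is $R_3$. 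Because the three summands are pairwise inequivalent, Schur's lemma kills all cross terms, so each of $|\mathfrak{R}^\mathfrak{u}|^2$, $|\overset{\circ}{R}|^2$, $|\overset{\circ}{{\rm Ric}}|^2$ is a diagonal sum $\sum_i c_i|R_i|^2$. Thus the identity reduces to three scalar comparisons, one per block.

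Then I would evaluate the constants on one explicit generator of each block, carrying out the Lie-algebra action $\Xi_\alpha\Rm$ over the standard basis of $\mathfrak{u}(n)$. On $\mathcal{K}_1$ a direct computation gives $\mathfrak{R}^\mathfrak{u}=0$ for $R=R_0$, matching that $\overset{\circ}{R}$ and $\overset{\circ}{{\rm Ric}}$ both vanish on $R_0$. On $\mathcal{K}_3$, which is fully trace-free, one finds the Casimir eigenvalue $4(n+1)$ while $\overset{\circ}{{\rm Ric}}=0$, giving $|\mathfrak{R}^\mathfrak{u}|^2=4(n+1)|R_3|^2$. On $\mathcal{K}_2$ one computes both the Casimir eigenvalue and the proportionality constant $\lambda=|\overset{\circ}{{\rm Ric}}|^2/|R_2|^2$ and checks the required relation $c_2=4(n+1)-4\lambda$, with $\lambda<n+1$. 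Assembling the three blocks yields $|\mathfrak{R}^\mathfrak{u}|^2=4(n+1)(|R_2|^2+|R_3|^2)-4|\overset{\circ}{{\rm Ric}}|^2=4(n+1)|\overset{\circ}{R}|^2-4|\overset{\circ}{{\rm Ric}}|^2$. The final assertion is then immediate: since the block coefficients on $\mathcal{K}_2$ and $\mathcal{K}_3$ are strictly positive, $|\mathfrak{R}^\mathfrak{u}|^2=0$ forces $R_2=R_3=0$, i.e. $R=R_1$ has constant holomorphic sectional curvature, and the converse direction was already observed on $\mathcal{K}_1$.

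The main obstacle is the third step: pinning down the numerical coefficients $4(n+1)$ and $4$ requires faithfully summing the action $\Xi_\alpha\Rm$ of an orthonormal basis of $\mathfrak{u}(n)$ against the model tensors and keeping precise track of every contraction with the K\"ahler form $\Omega$ — it is exactly these traces over the complex structure that produce the factor $n+1$, and a sign or normalization slip there is what would break the identity and, with it, the clean dichotomy ``vanishes iff constant holomorphic sectional curvature.'' An alternative that sidesteps the Casimir bookkeeping is to compute $|\mathfrak{R}^\mathfrak{u}|^2$ directly in a unitary frame from $|L\Rm|^2$ for $L\in\mathfrak{u}(n)$, but this trades representation theory for an equally delicate index computation.
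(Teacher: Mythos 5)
Your overall strategy is sound, and it is essentially the strategy behind the proof the paper points to: the paper gives no argument of its own for Lemma \ref{lem3}, only the citation \cite[Lemma 5.2]{PW}, and the cited argument rests on the same representation-theoretic skeleton you describe --- the orthogonal $U(n)$-decomposition of ${\rm Sym}^2_B(\mathfrak{u}(n))$ into the constant-holomorphic-sectional-curvature line, the primitive-Ricci summand and the Bochner summand (this is exactly the decomposition recalled in Section 2.4), with the invariant quadratic forms compared block by block. Your Schur reduction is also legitimate as stated: even though the summands are real representations (so the commutant could a priori be $\mathbb{C}$ or $\mathbb{H}$), self-adjoint intertwiners are still scalar, so invariant quadratic forms on each irreducible block are unique up to scale and cross terms between inequivalent blocks vanish. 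The observation that the constant-curvature model is $U(n)$-invariant, hence contributes $0$ to $|\mathfrak{R}^{\mathfrak{u}}|^2$, is correct and disposes of the first block.

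The genuine gap is that the write-up stops exactly where the lemma begins. Reducing the identity to three scalar equations is the routine part; the entire content of the statement is the values of those scalars, and you assert them (``one finds the Casimir eigenvalue $4(n+1)$'', ``checks the required relation $c_2=4(n+1)-4\lambda$, with $\lambda<n+1$'') without computing anything. These numbers are not independent facts you may quote --- they \emph{are} the lemma --- so as written the argument assumes what is to be proven unless the block computations are actually carried out. Two concrete points the deferred computation must settle: (i) the normalization $|\Rm|^2=4|R|^2$, since the left-hand side is computed from the action $\Xi_\alpha\Rm$ on the $(0,4)$-tensor while the right-hand side uses bilinear-form norms, and the factors $4$ in $4(n+1)|\overset{\circ}{R}|^2$ and $4|\overset{\circ}{\Ric}|^2$ come precisely from tracking this; and (ii) the ``in particular'' equivalence requires the strict inequality $c_2=4(n+1)-4\lambda>0$ on the Ricci block, which again needs the actual value of $\lambda$, obtainable from the explicit Ricci component $\frac{1}{2(m+2)}\bigl(\overset{\circ}{\Ric}\KN g+\rho_0\KN\omega+2(\rho_0\otimes\omega+\omega\otimes\rho_0)\bigr)$ of the decomposition in Section 2.4. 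Until (i) and (ii) are done, you have only shown that $|\mathfrak{R}^{\mathfrak{u}}|^2$ is block-diagonal and vanishes on the constant-curvature line, not the identity or the rigidity dichotomy.
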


\subsection{Quaternion K\"ahler manifold}

A Riemannian manifold with holonomy contained in $Sp(m)\cdot Sp(1), m\geq 2$ is called quaternion K\"ahler manifold. Locally there exist almost complex structures $I,J,K$ such that $IJ=-JI=K$. For a local orthonormal frame $\left\{e_i,Ie_i,Je_i,Ke_i \right\}_{i=1,\cdots,m}$, consider
\begin{align*}
\omega_I&=\sum_{i=1}^{m}e_i \wedge Ie_i + Je_i \wedge Ke_i,\\
\omega_J&=\sum_{i=1}^{m}e_i \wedge Je_i + Ke_i \wedge Ie_i,\\
\omega_K&=\sum_{i=1}^{m}e_i \wedge Ke_i + Ie_i \wedge Je_i.
\end{align*}

It is straightforward to check that
$$g(IX,Y)=g(X\wedge Y,\omega_I),g(JX,Y)=g(X\wedge Y,\omega_J),g(KX,Y)=g(X\wedge Y,\omega_K).$$

The curvature operator of quaternionic projective space is given by
\begin{align*}
\mathfrak{R}_{\mathbb{HP}^m}(X\wedge Y)&=X\wedge Y+IX\wedge IY+JX\wedge JY+KX\wedge KY\\
&+2g(X\wedge Y,\omega_I)\omega_I+2g(X\wedge Y,\omega_J)\omega_J+2g(X\wedge Y,\omega_K)\omega_K.
\end{align*}

The curvature operator $R\in Sym^2_{B}(TM)$ of a quaternion K\"ahler manifold satisfies
\begin{equation*}
R=\frac{scal}{16m(m+2)}R_{\mathbb{HP}^m}+R_0,
\end{equation*}
where $R_0$ is the hyper-K\"ahler component. Hyper-K\"ahler manifolds are necessarily Ricci-flat. 

\begin{lemma}\label{lem4}
Let $m\geq 2$. An algebraic quaternion K\"ahler curvature tensor $R\in Sym^2_{B}(\mathfrak{sp}(m)\oplus \mathfrak{sp}(1))$ satisfies
	$$|\mathfrak{R}^{\mathfrak{sp}(m)\oplus \mathfrak{sp}(1)}|^2= \frac{4}{3}(3m+4)|R_0|^2 $$
	In particular, $\mathfrak{R}^{\mathfrak{sp}(m)\oplus \mathfrak{sp}(1)}=0$ if and only if $R$ has constant quaternionic sectional curvature. 
	\begin{proof}
		\cite[Corollary 4.5]{PW}. 
	\end{proof}
\end{lemma}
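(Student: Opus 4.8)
The plan is to follow the representation-theoretic template that underlies the Kähler identity in Lemma~\ref{lem3}, adapted to the holonomy algebra $\mathfrak{g}=\mathfrak{sp}(m)\oplus\mathfrak{sp}(1)$. The starting point is the identity recorded in Section~\ref{sec0}, namely $|\mathfrak{R}^{\mathfrak{g}}|^2=\sum_\alpha|\Xi_\alpha\mathfrak{R}|^2$, where $\{\Xi_\alpha\}$ is an orthonormal basis of $\mathfrak{g}\subset\Lambda^2 V$ and $\Xi_\alpha$ acts on the $4$-tensor $\mathfrak{R}$ by the derivation of Definition~\ref{def22}. I would then insert the orthogonal splitting $R=\tfrac{\mathrm{scal}}{16m(m+2)}R_{\mathbb{HP}^m}+R_0$ into this expression and exploit the good behaviour of each summand under the $\mathfrak{g}$-action.

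First I would dispose of the model term. Since $\mathbb{HP}^m$ is a symmetric space, its curvature tensor is invariant under the holonomy group $\mathrm{Sp}(m)\cdot\mathrm{Sp}(1)$, so $\Xi_\alpha R_{\mathbb{HP}^m}=0$ for every $\Xi_\alpha\in\mathfrak{g}$. Consequently both the self-term of $R_{\mathbb{HP}^m}$ and its cross terms with $R_0$ vanish, and the computation collapses to $|\mathfrak{R}^{\mathfrak{g}}|^2=\sum_\alpha|\Xi_\alpha R_0|^2$. This is the algebraic incarnation of the statement that $R$ has constant quaternionic sectional curvature if and only if $R_0=0$, and it already yields the final clause of the lemma once a proportionality $\sum_\alpha|\Xi_\alpha R_0|^2=c\,|R_0|^2$ with $c>0$ is established.

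To obtain that proportionality I would invoke equivariance. The assignment $T\mapsto\sum_\alpha|\Xi_\alpha T|^2$ is, up to sign, the Casimir quadratic form of $\mathfrak{g}$: the operators $\Xi_\alpha$ act skew-adjointly on tensors, so $\sum_\alpha|\Xi_\alpha T|^2=-\langle T,\sum_\alpha\Xi_\alpha^2 T\rangle$, and hence the form is $\mathrm{Sp}(m)\cdot\mathrm{Sp}(1)$-invariant. The space of hyper-Kähler algebraic curvature tensors is an irreducible $\mathrm{Sp}(m)\cdot\mathrm{Sp}(1)$-module isomorphic to $S^4H$, $H=\mathbb{C}^{2m}$, on which the $\mathrm{Sp}(1)$-factor acts trivially; thus $\Xi_\alpha R_0=0$ for $\Xi_\alpha\in\mathfrak{sp}(1)$ and only the $\mathfrak{sp}(m)$-generators contribute. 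By Schur's lemma the invariant form is a single scalar multiple of $|\cdot|^2$ on this module, which gives $\sum_\alpha|\Xi_\alpha R_0|^2=c\,|R_0|^2$ for a constant $c=c(m)$.

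It remains to pin down $c=\tfrac{4}{3}(3m+4)$, and this is where I expect the real work to lie. I would compute $c$ in one of two ways: either as the Casimir eigenvalue of $\mathfrak{sp}(m)$ on $S^4H$, being careful that the relevant inner product is the restriction of the $\Lambda^2 V$-metric rather than the Killing form, together with the normalization $|\mathrm{Rm}|^2=4|R|^2$; or, more robustly, by evaluating both $\sum_\alpha|\Xi_\alpha R_0|^2$ and $|R_0|^2$ on a single explicit nonzero model tensor in $S^4H$, for instance the rank-one quartic built from one quaternionic line, and reading off the ratio. The main obstacle is precisely this bookkeeping: matching the geometric normalizations and carrying out the contraction over the $\mathfrak{sp}(m)$-basis without dropping combinatorial factors, exactly the step that separates the constant $\tfrac{4}{3}(3m+4)$ obtained here from the $4(n+1)$-type constants appearing in the Kähler case of Lemma~\ref{lem3}.
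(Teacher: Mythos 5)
Your outline is structurally sound, and it is worth noting that the paper itself contains no proof of Lemma~\ref{lem4} at all: its ``proof'' is the bare citation \cite[Corollary 4.5]{PW}. What you propose is essentially a reconstruction of the argument behind that citation: split $R=\frac{\mathrm{scal}}{16m(m+2)}R_{\mathbb{HP}^m}+R_0$, kill the model term using the $\mathrm{Sp}(m)\cdot\mathrm{Sp}(1)$-invariance of the curvature tensor of the symmetric space $\mathbb{HP}^m$ (so the self-term and cross terms drop out of $\sum_\alpha|\Xi_\alpha R|^2$), observe that $\mathfrak{sp}(1)$ annihilates the hyper-K\"ahler part since $[\mathfrak{sp}(1),\mathfrak{sp}(m)]=0$, identify the hyper-K\"ahler curvature tensors with the irreducible $\mathrm{Sp}(m)$-module $S^4E$, $E=\mathbb{C}^{2m}$, and apply Schur's lemma to the invariant Casimir-type form $T\mapsto\sum_\alpha|\Xi_\alpha T|^2$. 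These reductions are all correct, and---together with the one-line remark that $S^4E$ contains no nonzero invariant vector, so the Schur constant is strictly positive---they do prove the qualitative clause: $\mathfrak{R}^{\mathfrak{sp}(m)\oplus\mathfrak{sp}(1)}=0$ if and only if $R_0=0$, i.e.\ if and only if $R$ has constant quaternionic sectional curvature.

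The genuine gap is that the quantitative content of the lemma, namely the identity $|\mathfrak{R}^{\mathfrak{sp}(m)\oplus\mathfrak{sp}(1)}|^2=\frac{4}{3}(3m+4)|R_0|^2$, is never established. Your argument terminates at $|\mathfrak{R}^{\mathfrak{sp}(m)\oplus\mathfrak{sp}(1)}|^2=c(m)|R_0|^2$ for an unspecified $c(m)>0$, and you explicitly defer ``the real work'' of computing $c(m)$ to one of two methods (the Casimir eigenvalue of $\mathfrak{sp}(m)$ on $S^4E$ in the $\Lambda^2$-normalization, or evaluation on an explicit quartic), neither of which is carried out. This omission is not cosmetic, because the exact value of the constant is what the paper actually consumes downstream: in Theorem~\ref{quaternion} the chain
\begin{equation*}
|LR|^{2}=|LR_0|^2\leq 8|L|^{2}|R_0|^{2}=\frac{6}{3m+4}\left|R^{\mathfrak{sp}(m)\oplus \mathfrak{sp}(1)}\right|^{2}|L|^{2}
\end{equation*}
requires precisely $8=\frac{6}{3m+4}\cdot\frac{4}{3}(3m+4)$, so an undetermined $c(m)$ would leave the curvature threshold in that theorem (and hence its application of Lemma~\ref{lem2}) unjustified. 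To close the gap you must do the bookkeeping you yourself flag: fix the inner product on $\mathfrak{sp}(m)\subset\Lambda^2V$ as the restriction of the $\Lambda^2$-metric rather than the Killing form, track the normalization $|\Rm|^2=4|R|^2$ consistently on both sides, and then either compute the Casimir eigenvalue on $S^4E$ or contract against a concrete nonzero $R_0$. Until that computation is done, what you have is a proof of the ``in particular'' statement plus a reduction of the displayed identity to a finite calculation---not a proof of the lemma.
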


\subsection{K\"ahler manifolds with divergence free Bochner tensors}
Let $(M,J,g)$ be a K\"ahler manifold of real dimension $2m$. Let $\omega(X,Y)=g(JX,Y)$ denote the K\"ahler form and $\rho(X,Y)=\Ric(JX,Y)$ denote the Ricci form. The trace-free Ricci tensor is $\overset{\circ}{{\rm Ric}}=\Ric-\frac{\rm scal}{2m}g$ and the primitive part of the Ricci form is $\rho_0=\rho-\frac{\rm scal}{2m}\omega$.

The curvature tensor decomposes into a K\"ahler curvature tensor with constant holomorphic sectional curvature, a K\"ahler curvature tensor with trace-free Ricci curvature and the Bochner tensor,
\begin{align*}
\Rm =& \frac{\rm scal}{4m(m+1)}\left(\frac{1}{2}g \KN g+\frac{1}{2} \omega \KN \omega+2\omega \otimes \omega \right)\\
&+\frac{1}{2(m+2)}\left(\overset{\circ}{{\rm Ric}}\KN g + \rho_0 \KN \omega +2(\rho_0 \otimes \omega+\omega \otimes \rho_0) \right)+B.
\end{align*}

The Bochner tensor is totally trace-free \cite{Tachibana}. That is, if $e_1,\cdots,e_{2m}$ is an orthonormal basis of $TM$, then
\begin{equation*}
\sum_{i=1}^{2m}B(e_i,Y,e_i,W)=\sum_{i=1}^{2m}B(e_i,Je_i,Z,W)=0.
\end{equation*} 

\begin{proposition}\label{prop2}
Let $(M,g)$ be a K\"ahler manifold. If the Bochner tensor is divergence free, then it satisfies the second Bianchi identity and consequently 
\begin{equation*}
\nabla^{*} \nabla B+c \Ric(B)=0.
\end{equation*}
\begin{proof}
\cite[Proposition 3.2]{PW21}. 
\end{proof}
\end{proposition}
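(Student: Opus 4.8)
The plan is to prove Proposition~\ref{prop2} in two stages: first establish that a divergence-free Bochner tensor satisfies the second Bianchi identity, and then use this to conclude that $B$ is harmonic with respect to the Lichnerowicz Laplacian, i.e. that $\nabla^*\nabla B + c\,\Ric(B) = 0$. The starting point is the curvature decomposition stated just above the proposition, which writes $\Rm$ as a sum of a constant-holomorphic-sectional-curvature piece, a trace-free Ricci piece built from $\overset{\circ}{\Ric}$ and $\rho_0$, and the Bochner tensor $B$. Since $\Rm$ always satisfies the full second Bianchi identity $\nabla_{[X}\Rm(Y,Z]\,\cdot\,) = 0$, and the first two summands in the decomposition are explicit algebraic combinations of $g$, $\omega$, $\Ric$ and $\Scal$, I would compute the covariant derivative of each of these model pieces. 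The key algebraic input is that $B$ is totally trace-free (the displayed trace conditions $\sum_i B(e_i,Y,e_i,W)=\sum_i B(e_i,Je_i,Z,W)=0$), so that the divergence of $B$ controls exactly the failure of the Ricci-type terms to satisfy the second Bianchi identity.

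Concretely, I would first recall that for any algebraic curvature tensor the second Bianchi identity together with a contraction yields the ``contracted second Bianchi identity'' relating $\divg\Rm$ to the derivative of $\Ric$; in the Kähler setting the analogous statement expresses $\divg B$ in terms of $\nabla\overset{\circ}{\Ric}$ and $\nabla\Scal$. Tracing the decomposition and using that $B$ is totally trace-free, one finds that $\divg B = 0$ is equivalent to a Codazzi-type condition on the trace-free Ricci tensor, namely that $\overset{\circ}{\Ric}$ (equivalently $\rho_0$) is a Codazzi tensor and $\Scal$ is constant. Feeding this back into $\nabla\Rm$, the contributions of the two model pieces to the second Bianchi expression cancel, leaving $\nabla_{[X}B(Y,Z]\,\cdot\,)=0$; thus $B$ inherits the second Bianchi identity from $\Rm$.

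Once $B$ is known to be a divergence-free tensor satisfying the second Bianchi identity, the passage to harmonicity is the same mechanism recorded in the excerpt for $\Rm$ itself (see Example following the definition of $\Delta_L$, part (b)). Namely, the second Bianchi identity and vanishing divergence together identify $B$ as a harmonic tensor for the Hodge-type Laplacian at the value $c=\tfrac12$, and more generally the standard Weitzenböck/Lichnerowicz identity for algebraic curvature tensors gives $\Delta_L B = \nabla^*\nabla B + c\,\Ric(B)$, which must vanish because $B$ is a (Bianchi-type) harmonic curvature tensor. I would invoke the Weitzenböck formula for curvature tensors to rewrite the rough Laplacian of $B$ and match it against $-c\,\Ric(B)$, using that $B$ lies in the kernel of the relevant first-order Hodge operators.

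The main obstacle I anticipate is the bookkeeping in the first stage: verifying that the $\nabla$-derivatives of the two explicit model summands, after applying the total trace-freeness of $B$ and the Kähler symmetries $\Rm(X,Y,Z,W)=\Rm(JX,JY,Z,W)=\Rm(X,Y,JZ,JW)$, combine precisely so that $\divg B=0$ forces the second Bianchi identity for $B$ with no leftover terms. This requires carefully tracking how the Kähler-form factors $\omega\KN\omega$ and $\omega\otimes\omega$ interact with covariant differentiation (recall $\nabla\omega=0$), and confirming that the antisymmetrization in the Bianchi sum annihilates all the symmetric $g$-- and $\Ric$--type contributions. Since these are the routine but delicate identities established in \cite[Proposition~3.2]{PW21}, I would follow that computation, using the Kähler parallelism of $\omega$ to reduce the verification to the single Codazzi condition on $\overset{\circ}{\Ric}$.
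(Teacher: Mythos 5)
Your two-stage architecture is exactly that of the argument the paper points to (the paper's ``proof'' is nothing but the citation \cite[Proposition~3.2]{PW21}): first show that $\divg B=0$ forces the second Bianchi identity for $B$ by differentiating the curvature decomposition, then invoke the Lichnerowicz-type identity asserting that a tensor with curvature symmetries which satisfies the second Bianchi identity and is divergence free is harmonic for $\Delta_L$. Your second stage is the standard mechanism (it is the same fact the paper records for $\Rm$ in its Example on Lichnerowicz Laplacians), and your observation that the relevant constant is $c=\tfrac12$ rather than an arbitrary $c$ is correct.

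The genuine gap is in your first stage, in the sentence identifying the integrability condition: you claim that $\divg B=0$ is \emph{equivalent} to ``$\overset{\circ}{\Ric}$ is a Codazzi tensor and $\Scal$ is constant.'' On a K\"{a}hler manifold this condition is far stronger than $\divg B=0$. Indeed, if $\overset{\circ}{\Ric}$ is Codazzi and $\Scal$ is constant, then $\Ric$ itself is Codazzi, so $S(X,Y,Z):=(\nabla_X\Ric)(Y,Z)$ is totally symmetric; since the Ricci form $\rho$ is closed on any K\"{a}hler manifold,
\[
S(X,JY,Z)+S(Y,JZ,X)+S(Z,JX,Y)=0,
\]
and replacing $Y$ by $JY$, then using total symmetry and $S(X,JY,JZ)=S(X,Y,Z)$, gives $-S+S+S=S\equiv 0$, i.e.\ $\nabla\Ric=0$. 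So your claimed equivalent of $\divg B=0$ forces the Ricci tensor to be parallel --- yet there exist Bochner-flat K\"{a}hler manifolds (hence certainly with $\divg B=0$) whose Ricci tensor is not parallel and whose scalar curvature is not constant: Bryant's Bochner--K\"{a}hler metrics \cite{Bry}, the Bochner-flat metrics on weighted projective spaces, and indeed the intransitive models appearing as case (3) of Theorem~\ref{d2} in this very paper. Consequently $\divg B=0$ cannot imply your Codazzi condition, and the step ``feeding this back into $\nabla\Rm$'' fails as written. The condition actually extracted from $\divg B=0$ is a $J$-twisted (K\"{a}hler--Cotton) condition, in which the terms $\rho_0\KN\omega$ and $\rho_0\otimes\omega+\omega\otimes\rho_0$ of the decomposition contribute $J$-contractions of $\nabla\Ric$; the cancellation in the Bianchi sum must be run with that tensor, as is done in \cite{PW21}, not with the naive Riemannian-style Codazzi reduction.
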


\section{Vanishing results}
	In this section, we assume that the curvature term $g(\mathfrak{R}_{|\mathfrak{g}}(T^\mathfrak{g}), \overline{T}^\mathfrak{g}) \geqslant -\kappa |T|^2$ for every harmonic tensor $T$ and for some $\kappa \geqslant 0$. We note that in the rest of this paper, we always assume that $Q\geq2$. For $\kappa =0$, we give a proof of Theorem \ref{thm1} as follows.

\begin{proof}[\textsc{Proof of Theorem \ref{thm1}}]
	For $T$ is a harmonic tensor, recall that the Bochner formula for harmonic tensor $T$ implies
	$$
	\Delta \frac{1}{2}|T|^{2}=|\nabla T|^{2}+c \cdot g(\mathfrak{R}_{|\mathfrak{g}}(T^\mathfrak{g}), \overline{T}^\mathfrak{g}).
	$$
	Since $g(\mathfrak{R}_{|\mathfrak{g}}(T^\mathfrak{g}), \overline{T}^\mathfrak{g}) \geqslant 0$,	same arguments from \cite{CDH} yield the proof. 
\end{proof}

	\begin{proof}[\textsc{Proof of Theorem \ref{thm2},\ref{thm2Q}}]
	Using the assumption $g(\mathfrak{R}_{|\mathfrak{g}}(T^\mathfrak{g}), \overline{T}^\mathfrak{g}) \geqslant -\kappa\rho|T|^2$, we obtain that 
$$c\kappa \int_{M}\rho \varphi^2 |T|^{q+2} \geqslant 2\int_{M} \varphi |T|^{q+1} \left< \nabla \varphi, \nabla |T| \right> + (q+1) \int_{M} \varphi^2|T|^q |\nabla|T||^2. $$

From same arguments from \cite{CDH}, for any $\kappa<\frac{4(Q-1)}{cQ^2}$ and $R>0$, it holds that
$$\int_{M} \varphi^2 |T|^{Q-2} |\nabla |T||^2 \leqslant \frac{4C}{R^2} \int_{M} |T|^{Q},$$
where $C=C(\varepsilon,q)>0$ and $\varepsilon$ is sufficient small real number. Let $R \to \infty$ and since $|T| \in L^Q(M)$, then $|T|$ is constant on each connected component of $M$. From the hypothesis~\eqref{eq:C2}, the volume of $M$ is infinite \cite[Corollary 3.2]{LPWJ}. By $|T| \in L^Q(M)$, it follows that $|T|=0$. Therefore, $T \equiv 0$. The proof is complete.  
\end{proof}
\begin{remark}\label{kato}
	If a refined Kato inequality 
	$$|\nabla T|^2\geq(1+a)|\nabla|T||^2$$
	holds true then as in \cite{CDH}, we can improve the upper bound of $\kappa$ to be
	$$\kappa<\frac{4(Q-1+a)}{cQ^2}.$$
\end{remark}
\begin{theorem}\label{thm3}
Let $(M, g)$ be a complete non-compact K\"{a}hler manifold of complex dimension $n$. If $p\not=q$ and
    $$\mu_1+\ldots+\mu_{\lfloor C^{p,q}\rfloor}+(C^{p,q}-\lfloor C^{p,q}\rfloor)\mu_{\lfloor C^{p,q}\rfloor+1}\geq 0,$$
    then every harmonic $(p,q)$-form is parallel. 
    
    In particular, if $\mu_1+\ldots+\mu_{\lfloor C^{p,q}\rfloor}+(C^{p,q}-\lfloor C^{p,q}\rfloor)\mu_{\lfloor C^{p,q}\rfloor+1}> 0$, then there are no nontrivial $(p,q)$-harmonic forms with finite $L^Q (Q\geq2)$-norm on $M$.
\end{theorem}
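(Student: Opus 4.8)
The plan is to reduce both assertions to the Bochner formula recorded in Section 2 together with Proposition \ref{prop1}, and then to invoke Theorem \ref{thm1} for the parallelism statement. First I would record the simplification coming from $p\neq q$: by Definition \ref{def25} one has $\overset{\circ}{\varphi}=\varphi$, so $|\overset{\circ}{\varphi}|=|\varphi|$ throughout. Feeding the eigenvalue hypothesis into Proposition \ref{prop1}(3) with $\kappa=0$ gives $g(\mathfrak{R}_{|\mathfrak{u}}(\varphi^\mathfrak{u}),\overline{\varphi}^\mathfrak{u})\geq 0$ for every harmonic $(p,q)$-form $\varphi$, which by Proposition \ref{ric} is exactly the nonnegativity of the Weitzenb\"ock term $g(\Ric(\varphi),\overline{\varphi})$. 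Substituting into the Bochner formula yields $\Delta\frac12|\varphi|^2=|\nabla\varphi|^2+c\,g(\mathfrak{R}_{|\mathfrak{u}}(\varphi^\mathfrak{u}),\overline{\varphi}^\mathfrak{u})\geq|\nabla\varphi|^2\geq 0$.

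For the first assertion I would observe that the nonnegativity just established is precisely the hypothesis of Theorem \ref{thm1} applied to the harmonic tensor $\varphi$. Hence Theorem \ref{thm1} applies directly and forces $\varphi$ to be parallel; as in that theorem, the passage from the pointwise Bochner inequality to $\nabla\varphi\equiv 0$ on the complete non-compact manifold $M$ is carried out by the weighted cutoff and integration argument of \cite{CDH}, and this is the step that consumes completeness together with the finiteness of the $L^Q$-norm.

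For the second (``in particular'') assertion I would first note that the strict hypothesis implies the weak one, so by the first part every harmonic $(p,q)$-form $\varphi$ of finite $L^Q$-norm is parallel; thus $\nabla\varphi\equiv 0$ and $|\varphi|$ is constant. Now apply Proposition \ref{prop1}(2): the strict eigenvalue condition gives $g(\mathfrak{R}_{|\mathfrak{u}}(\varphi^\mathfrak{u}),\overline{\varphi}^\mathfrak{u})>0$ unless $\varphi=0$. On the other hand, inserting $\nabla\varphi=0$ into the Bochner formula gives $0=\Delta\frac12|\varphi|^2=c\,g(\mathfrak{R}_{|\mathfrak{u}}(\varphi^\mathfrak{u}),\overline{\varphi}^\mathfrak{u})$, so the curvature term vanishes identically. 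Comparing the two forces $\varphi\equiv 0$, proving that no nontrivial such forms exist.

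The main obstacle is the first step's reliance on Theorem \ref{thm1}: the rest is bookkeeping with $\overset{\circ}{\varphi}=\varphi$ and with the sign conventions in the Weitzenb\"ock and Bochner identities, but deducing genuine parallelism (rather than mere subharmonicity of $|\varphi|^2$) on a complete non-compact manifold is exactly where the $L^Q$-integrability and the argument borrowed from \cite{CDH} are essential. A secondary point to verify is that Proposition \ref{prop1}(2), phrased for $\varphi\in\Lambda^{p,q}_kV^*$ with constant $C^{p,q}_k$, does deliver the strict positivity governed by $C^{p,q}$; this is settled by reducing to the primitive ($k=0$) components of the Lefschetz decomposition and using the identity $C^{p,q}=C^{p,q}_0$.
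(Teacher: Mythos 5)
Your proof is correct and follows essentially the same route as the paper: the curvature nonnegativity comes from the Petersen--Wink eigenvalue machinery (the paper derives the estimate $|L\omega|^2\leq \tfrac{1}{C^{p,q}_k}|\omega^{\mathfrak{u}}|^2|L|^2$ and applies Lemma~\ref{lem2}, which is exactly what your citation of Proposition~\ref{prop1} packages), and parallelism is then obtained by applying Theorem~\ref{thm1} with the Hodge Laplacian, using completeness and the $L^Q$ bound. Your explicit handling of the second assertion (parallel $\Rightarrow$ curvature term $=0$ by the Bochner formula $\Rightarrow$ $\varphi=0$ by the strict positivity in Proposition~\ref{prop1}(2)) is the intended argument that the paper compresses into ``follow the argument as in Theorem~\ref{thm1}.''
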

	\begin{proof}
	Let $\omega$ be a harmonic $(p,q)$-form with $|\omega| \in L^Q(M)$ for some $Q\geq2$. Applying Proposition \ref{lem1} and Proposition \ref{prop0}, we obtain that 
	\begin{equation}\label{eq6}
	|L\omega|^2 \leq (p+q-2k)|\overset{\circ}\omega|^2 |L|^2 = \frac{1}{C^{p,q}_k} |\omega^\mathfrak{u}|^2 |L|^2
	\end{equation} 
	for all $L \in \mathfrak{so}(TM)$.
	
	If $\mu_1+\ldots+\mu_{\lfloor C^{p,q}\rfloor}+(C^{p,q}-\lfloor C^{p,q}\rfloor)\mu_{\lfloor C^{p,q}\rfloor+1}\geq 0$, then the first conclusion of Lemma \ref{lem2} implies 
	$$
	g(\Re(\hat{\omega}), \hat{\omega}) \geq 0.
	$$
	An application of Theorem \ref{thm1} to Hodge Laplacian yields that $\omega$ is parallel. Moreover, $|\omega|$ is constant. Now, we can follow the argument as in Theorem \ref{thm1} to complete the proof. 
\end{proof}
We note that if $p=q$ then by Definition 2.5, we have $\overset{\circ}{\omega}=\omega$ if $\omega$ and $\Omega$ are perpendicular. Hence, using the proof of Theorem \ref{thm3}, we obtain the following result.
\begin{theorem}\label{thm3a}
	Let $(M, g)$ be a complete non-compact K\"{a}hler manifold of complex dimension $n$. If 
		$$\mu_1+\ldots+\mu_{\lfloor C^{p,p}\rfloor}+(C^{p,p}-\lfloor C^{p,p}\rfloor)\mu_{\lfloor C^{p,p}\rfloor+1}\geq 0,$$
	then every harmonic $(p,p)$-form $\omega$ is parallel provided that $\omega\perp\Omega$. 
	
	In particular, if $\mu_1+\ldots+\mu_{\lfloor C^{p,p}\rfloor}+(C^{p,p}-\lfloor C^{p,p}\rfloor)\mu_{\lfloor C^{p,p}\rfloor+1}> 0$, then there are no nontrivial $(p,p)$-harmonic forms $\omega$ with finite $L^Q (Q\geq2)$-norm on $M$ provided that $\omega\perp\Omega$.
\end{theorem}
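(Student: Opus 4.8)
The plan is to reduce to the off-diagonal argument of Theorem~\ref{thm3}, the sole new ingredient being the pointwise algebraic identity $\overset{\circ}{\omega} = \omega$ that the hypothesis $\omega \perp \Omega$ supplies in the diagonal range $p = q$. I would begin with a harmonic $(p,p)$-form $\omega$ satisfying $|\omega| \in L^Q(M)$ and $\omega \perp \Omega$, reading the orthogonality pointwise as $g(\omega, \Omega^p) = 0$. Substituting this into the $p = q$ branch of Definition~\ref{def25} immediately yields $\overset{\circ}{\omega} = \omega$, so that $\omega$ satisfies the same structural estimates that an off-diagonal form enjoys automatically.

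With $\overset{\circ}{\omega} = \omega$ in hand I would reproduce~\eqref{eq6} verbatim. The relevant constant is $C^{p,p}$, which coincides with $C^{p,p}_0$ (one checks $C^{p,q}_0 = C^{p,q}$, so the orthogonality to $\Omega^p$ places us in the primitive $k = 0$ component): Proposition~\ref{prop0} gives $|L\omega|^2 \leq 2p\,|\overset{\circ}{\omega}|^2 |L|^2$ for all $L$, and Proposition~\ref{p32} rewrites the right-hand side as $\tfrac{1}{C^{p,p}}|\omega^\mathfrak{u}|^2 |L|^2$. The standing eigenvalue inequality $\mu_1 + \ldots + \mu_{\lfloor C^{p,p}\rfloor} + (C^{p,p} - \lfloor C^{p,p}\rfloor)\mu_{\lfloor C^{p,p}\rfloor + 1} \geq 0$ then feeds into part~(3) of Proposition~\ref{prop1} with $\kappa = 0$ to yield the curvature nonnegativity $g(\mathfrak{R}(\omega^\mathfrak{u}), \overline{\omega}^\mathfrak{u}) \geq 0$.

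From here the Bochner formula for the Hodge Laplacian ($c = 1$) reads $\Delta \tfrac12 |\omega|^2 \geq |\nabla \omega|^2$, so Theorem~\ref{thm1} applies and forces $\omega$ to be parallel, whence $|\omega|$ is constant; this establishes the first assertion. For the strict ``in particular'' statement I would need no volume input: once $\omega$ is parallel one has $\nabla\omega = 0$, so the Bochner identity together with Proposition~\ref{ric} collapses to $g(\mathfrak{R}(\omega^\mathfrak{u}), \overline{\omega}^\mathfrak{u}) \equiv 0$. Under the strict eigenvalue inequality, part~(2) of Proposition~\ref{prop1} (again with $k = 0$, so that $C^{p,p}_0 = C^{p,p}$) asserts that this curvature term is strictly positive wherever $\omega \neq 0$; comparing the two forces $\omega \equiv 0$, the $L^Q$-hypothesis having been used only to license the appeal to Theorem~\ref{thm1}.

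The step I expect to carry the real content is the correct reading of the orthogonality hypothesis. The identity $\overset{\circ}{\omega} = \omega$ must hold at every point of $M$, so ``$\omega \perp \Omega$'' has to mean the pointwise vanishing $g(\omega_x, \Omega^p) = 0$ (equivalently, that $\omega$ is primitive), rather than mere $L^2$-orthogonality; and one should confirm that abbreviating $\Omega^p$ by $\Omega$ in the statement is harmless, since the projection in Definition~\ref{def25} is onto the line spanned by $\Omega^p$ and $|\Omega^p|$ is a nonzero constant. Everything downstream of this identity is word-for-word the proof of Theorem~\ref{thm3}, so no genuinely new analytic difficulty enters.
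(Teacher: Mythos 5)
Your overall strategy---read $\omega\perp\Omega$ pointwise, conclude $\overset{\circ}{\omega}=\omega$ from Definition~\ref{def25}, and rerun the proof of Theorem~\ref{thm3}---is exactly the paper's reduction, and your way of closing the strict case (parallel $\Rightarrow$ the Bochner identity forces $g(\mathfrak{R}(\omega^{\mathfrak{u}}),\overline{\omega}^{\mathfrak{u}})\equiv 0$ $\Rightarrow$ strict positivity forces $\omega\equiv 0$) is clean. However, there is a genuine error in the step you yourself single out as carrying the real content: orthogonality to $\Omega^p$ does \emph{not} place $\omega$ in the $k=0$ (primitive) component $V^{p,p}_0$. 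The pointwise Lefschetz splitting is $\Lambda^{p,p}V^*=\bigoplus_{k=0}^{p}V^{p,p}_k$, and the condition $g(\omega,\Omega^p)=0$ only kills the top piece $V^{p,p}_p={\rm span}\{\Omega^p\}$; for $p\geq 2$ the form $\omega$ may still have components in every $V^{p,p}_k$ with $1\leq k\leq p-1$. Concretely, $\omega=\Omega\wedge\alpha$ with $\alpha$ a primitive $(1,1)$-form is a $(2,2)$-form perpendicular to $\Omega^2$ lying entirely in $V^{2,2}_1$; for it Proposition~\ref{p32} gives $|\omega^{\mathfrak{u}}|^2=2n\,|\omega|^2$ (the $k=1$ value), not $4(n-1)|\omega|^2$ (the $k=0$ value you would use), so the identity by which you convert $2p\,|\overset{\circ}{\omega}|^2|L|^2$ into $\frac{1}{C^{p,p}}|\omega^{\mathfrak{u}}|^2|L|^2$ is false for non-primitive forms. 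The same defect infects your appeals to Proposition~\ref{prop1}(2)--(3), which are likewise stated only for $\varphi\in\Lambda^{p,q}_kV^*$. Only for $p=1$ does your shortcut ($\omega\perp\Omega\Rightarrow\omega$ primitive) hold as stated.

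The conclusion of that step is salvageable, but the repair requires precisely the ingredient you skipped, and it is the reason the index $k$ and the constant $C^{p,q}_k$ appear in the paper's inequality~\eqref{eq6}: decompose $\omega=\sum_{k=0}^{p-1}\omega_{(k)}$ with $\omega_{(k)}\in V^{p,p}_k$ (no $k=p$ term by hypothesis). Each $\omega_{(k)}$ is again harmonic, since wedging with $\Omega$ and its adjoint commute with the Hodge Laplacian on any K\"ahler manifold, so the Lefschetz projections preserve harmonicity; each satisfies $\overset{\circ}{\omega_{(k)}}=\omega_{(k)}$ and $|\omega_{(k)}|\leq|\omega|$, hence lies in $L^Q$. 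Applying Propositions~\ref{prop0} and~\ref{p32} componentwise gives $|L\omega_{(k)}|^2\leq\frac{1}{C^{p,p}_k}|\omega_{(k)}^{\mathfrak{u}}|^2|L|^2$, and since $C^{p,p}_k=C^{p,p}+k\geq C^{p,p}$ while the eigenvalue hypothesis is monotone in the constant (nonnegativity of the partial sum forces $\mu_{\lfloor C^{p,p}\rfloor+1}\geq 0$, hence all later eigenvalues are nonnegative), the hypothesis stated with $C^{p,p}$ yields the one needed for each $C^{p,p}_k$. Then Lemma~\ref{lem2} and Theorem~\ref{thm1} apply to each component; moreover the $\mathfrak{u}(n)$-action preserves each $V^{p,p}_k$ (as $\Omega$ is $U(n)$-invariant), so the splitting is orthogonal, $|L\omega|^2=\sum_k|L\omega_{(k)}|^2$ and $|\omega^{\mathfrak{u}}|^2=\sum_k|\omega_{(k)}^{\mathfrak{u}}|^2$, and the componentwise conclusions sum to give the statement for $\omega$ itself. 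This componentwise reading, with the constants $C^{p,p}_k$ rather than $C^{p,p}_0$ alone, is what the paper's proof of Theorem~\ref{thm3} implicitly encodes and what your version is missing.
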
  

	Following \cite{Car}, the above result has a reduced $L^2$ cohomology interpretation as follows. Let $\mathcal{H}^{\ell}(M)$ be the space of $L^2$ harmonic $\ell$-forms, saysing $\mathcal{H}^\ell(M)=\{\omega\in L^2(\Lambda^{\ell} T^*M): d\omega=\delta\omega=0\}$, where $\delta$ is the dual of the differential operator $d$ and $Z^\ell_2(M)$ the kernel of the unbounded operator $d$ acting on $L^2(\Lambda^\ell T^*M)$, or equivalently
$$Z^\ell_2(M)=\{\omega\in L^2(\Lambda^\ell T^*M): d\omega=0\}.$$
The space $\mathcal{H}^\ell(M)$ can be used to characterize the \textit{reduced $L^2$ cohomology group} as follows 
$$\mathcal{H}^\ell(M)=Z^\ell_2(M)/\overline{dC_0^\infty(L^2(\Lambda^{\ell-1} T^*M)},$$
where the closure is taken with respect to the $L^2$ topology. It is worth to note that the finiteness of ${\rm dim}\mathcal{H}^\ell(M)$ depends only on the geometry of ends (\cite{Lott}). Observe that 
$$\Lambda^{\ell} T^{*}M=\underset{p+q=\ell}{\otimes}\Lambda^{p,q}T^{*}M.$$
Therefore, Theorem \ref{thm3} leads immediately to the following result.
\begin{corollary}
	Let $n \geq 3$ and let $(M, g)$ be a complete non-compact $n$-dimensional K\"ahler 	manifold. Then every harmonic $(p,q)$-form $\omega$ with $|\omega| \in L^Q(M)$ for some $Q\geq2$ is vanishing if the curvature tensor is $\lceil \frac{n}{2} \rceil$-nonnegative. In particular, every harmonic $(p,q)$-form $\omega$ with $|\omega| \in L^2(M)$ is vanishing, consequently, every reduced $L^2$ cohomology groups $\mathcal{H}^\ell(M)$ are trivial if $\ell$ is odd.
\end{corollary}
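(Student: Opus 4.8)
The plan is to derive the corollary from Theorems~\ref{thm3} and~\ref{thm3a} by converting the curvature hypothesis into the weighted eigenvalue inequality those results require. Write $\mu_1\le\cdots\le\mu_{n^2}$ for the eigenvalues of the K\"ahler curvature operator $\mathfrak{R}_{|\mathfrak{u}(n)}$, so that $\lceil n/2\rceil$-nonnegativity reads $\mu_1+\cdots+\mu_{\lceil n/2\rceil}\ge0$. Given a harmonic $(p,q)$-form $\omega$ with $|\omega|\in L^Q(M)$ and (in the cohomological application) $p\ne q$, Theorem~\ref{thm3} forces $\omega$ to be parallel, whence $\omega\equiv0$ after feeding the constancy of $|\omega|$ into the integrability/volume argument of Theorem~\ref{thm1}, provided
$$\mu_1+\cdots+\mu_{\lfloor C^{p,q}\rfloor}+(C^{p,q}-\lfloor C^{p,q}\rfloor)\,\mu_{\lfloor C^{p,q}\rfloor+1}\ \ge\ 0,\qquad C^{p,q}=n+1-\frac{p^2+q^2}{p+q}.$$
So the corollary reduces to producing this inequality from $\lceil n/2\rceil$-nonnegativity, uniformly over the types $(p,q)$ that occur (with $p+q\le n$ by Serre duality).

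First I would record the monotonicity of $k$-nonnegativity: if $\mu_1+\cdots+\mu_k\ge0$ then, since $\mu_k$ dominates the average of the first $k$ eigenvalues, $\mu_k\ge\frac{1}{k}(\mu_1+\cdots+\mu_k)\ge0$, hence $\mu_{k+1}\ge\mu_k\ge0$ and $\mu_1+\cdots+\mu_{k+1}\ge0$; inductively, $\lceil n/2\rceil$-nonnegativity implies $N$-nonnegativity and $\mu_{N+1}\ge0$ for every integer $N\ge\lceil n/2\rceil$. Consequently, whenever $N:=\lfloor C^{p,q}\rfloor\ge\lceil n/2\rceil$, both $\mu_1+\cdots+\mu_N\ge0$ and $(C^{p,q}-N)\,\mu_{N+1}\ge0$, so the displayed inequality holds with $\kappa=0$ and Theorem~\ref{thm3} applies.

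The heart of the matter, and the step I expect to be the main obstacle, is therefore the estimate $\lfloor C^{p,q}\rfloor\ge\lceil n/2\rceil$, i.e.\ $\frac{p^2+q^2}{p+q}\le\lfloor n/2\rfloor+1$. For a fixed odd degree $\ell=p+q\le n$ the quantity $C^{p,q}$ decreases as $(p,q)$ moves toward the unbalanced corner $(\ell,0)$, and optimizing over the odd degrees is what calibrates the threshold $\lceil n/2\rceil$. The delicate point is precisely the genuinely extreme types, where $C^{p,q}$ falls well below $\lceil n/2\rceil$ (for instance $C^{n,0}=1$): these are \emph{not} reached by the monotonicity argument above, since $\lceil n/2\rceil$-nonnegativity does not propagate downward to indices smaller than $\lceil n/2\rceil$. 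For them I would argue directly, using Proposition~\ref{p32} and the explicit form of $\omega^{\mathfrak{u}}$ to see that the Weitzenb\"ock term $g(\mathfrak{R}_{|\mathfrak{u}(n)}(\omega^{\mathfrak{u}}),\overline{\omega}^{\mathfrak{u}})$ of such forms collapses to a trace of $\mathfrak{R}_{|\mathfrak{u}(n)}$, i.e.\ to a Ricci/scalar curvature quantity, which is nonnegative because $\lceil n/2\rceil$-nonnegativity forces the total trace $\mu_1+\cdots+\mu_{n^2}$, and the Ricci curvature, to be nonnegative; the Bochner inequality then lands in the $\kappa=0$ case of Theorem~\ref{thm1}. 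A cleaner alternative worth trying is to first Lefschetz-decompose $\omega$ into primitive pieces lying in the spaces $V^{p,q}_k$ and to apply the sharper constants $C^{p,q}_k$ piece by piece.

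Finally I would assemble the cohomological statement. For $\ell$ odd the Hodge decomposition $\Lambda^\ell T^*M=\bigoplus_{p+q=\ell}\Lambda^{p,q}T^*M$ gives $\mathcal{H}^\ell(M)=\bigoplus_{p+q=\ell}\mathcal{H}^{p,q}(M)$, and every summand has $p\ne q$ since $\ell$ is odd; applying the vanishing just established to each $(p,q)$-component of an $L^2$ (the case $Q=2$) harmonic $\ell$-form shows the form is zero, so $\mathcal{H}^\ell(M)=0$ for all odd $\ell$, as claimed.
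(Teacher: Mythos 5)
Your first two steps --- reducing to Theorems~\ref{thm3}/\ref{thm3a} and propagating $\lceil n/2\rceil$-nonnegativity upward via $\mu_k\ge\tfrac1k(\mu_1+\cdots+\mu_k)\ge0$ --- are exactly what the paper intends: its entire proof is the bidegree decomposition of $\Lambda^\ell T^*M$ followed by the assertion that Theorem~\ref{thm3} applies, so the inequality $C^{p,q}\ge\lceil n/2\rceil$ that you isolate is indeed the unstated crux, and you are right that it fails for unbalanced bidegrees. The genuine gap is in your repair of those failing types. The Weitzenb\"ock term collapses to a trace \emph{only} for $(p,q)=(n,0)$ or $(0,n)$: there Proposition~\ref{p32} gives $|\varphi^{\mathfrak u}|^2=n|\varphi|^2$, which coincides exactly with the squared norm $\frac{(p-q)^2}{n}|\varphi|^2$ of the component of $\varphi^{\mathfrak u}$ along $\Omega/|\Omega|$, so the $\mathfrak u(n)$-factor of $\varphi^{\mathfrak u}$ is proportional to $\Omega$ and $g(\mathfrak{R}(\varphi^{\mathfrak u}),\overline{\varphi}^{\mathfrak u})=g(\mathfrak{R}(\Omega),\Omega)\,|\varphi|^2$, a multiple of scalar curvature. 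But for $n\ge5$ the failing set is strictly larger than $\{(n,0),(0,n)\}$: for instance $C^{n-1,0}=2$ and $C^{n-1,1}=3-\tfrac2n$ are both below $\lceil n/2\rceil$. For such forms $\varphi^{\mathfrak u}$ has a trace-free ($\mathfrak{su}(n)$) part of squared norm roughly $2n|\varphi|^2$ (for $(n-1,1)$), while the per-direction cap from Proposition~\ref{prop0} is $n|\varphi|^2$; hence the mass of $\varphi^{\mathfrak u}$ may sit on as few as two eigendirections of $\mathfrak{R}_{|\mathfrak u(n)}$, and an operator can be $\lceil n/2\rceil$-nonnegative while being very negative on two directions, the deficit being compensated by later eigenvalues. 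So neither your trace argument nor Lemma~\ref{lem2} yields nonnegativity of the curvature term for $(n-1,0)$- or $(n-1,1)$-forms. Your fallback does not close this either: in the Lefschetz decomposition the worst constant is the primitive piece, $C^{p,q}_0=C^{p,q}$, so working with the $C^{p,q}_k$ only reproduces the same constraint.

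In short, your argument is complete for $n=3,4$ (where the only failing types are $(n,0)$ and $(0,n)$, and your scalar-curvature observation, combined with $\operatorname{Ric}\ge0$ from the $(1,0)$-case $C^{1,0}=n$ and the infinite-volume step, does finish the proof), but it has a genuine gap for every $n\ge5$. Be aware that this gap is inherited from, not created by, your reconstruction: the paper's one-line proof requires precisely the inequality $C^{p,q}\ge\lceil n/2\rceil$ and offers no argument for unbalanced bidegrees, so the corollary as stated (with $\mu_i$ the eigenvalues of the K\"ahler curvature operator, as in Theorem~\ref{thm3}) is not actually justified by Theorem~\ref{thm3} alone. A correct write-up would either restrict to bidegrees satisfying $C^{p,q}\ge\lceil n/2\rceil$ together with $(n,0)$, strengthen the curvature hypothesis, or supply a genuinely new estimate for unbalanced $(p,q)$-forms beyond Lemma~\ref{lem2}.
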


\begin{remark}\label{remd}
We recall that a $k$-form $\omega$ is said to be a harmonic field if $(d+d^*)\omega=0$, where $d$ is the differential operator and $d^*$ its dual operator.  If $M$ is compact then the harmonic
fields coincide with the harmonic forms. When $M$ is K\"{a}hler and $\omega$ is a $(p,q)$-form, it is easy to prove that $\omega$ is a harmonic field if and only if $\partial \omega = \partial^{*} \omega = \overline{\partial} \omega = \overline{\partial}^{*} \omega=0$.	We would like to mention that there is a refined Kato inequality for $(p,q)$-harmonic field $\omega$ (see \cite{DP12}), namely, there exists a constant $D^{p,q}\geq0$ such that 
	$$|\nabla \omega|^2\geq  \frac{1}{D^{p,q}}|\nabla|\omega||^2,$$	
		where 
	$$
	D^{p,q}= \begin{cases}\min\left\{\max\left\{\frac{2p+1}{2p+2},\frac{2n-2p+1}{2n-2p+2} \right\},\max\left\{\frac{2q+1}{2q+2},\frac{2n-2q+1}{2n-2q+2}\right\}     \right\}^2 & p,q\neq n \\ \frac{1}{{2}} & p=n \text{ or } q=n. \end{cases}
	$$
Hence, Remark \ref{kato} implies an improvement of the upper bound of $\kappa$ as follows
	$$0<\kappa< \frac{4(Q+\frac{1}{D^{p,q}}-3)}{cQ^2}.$$
Moreover, it is proved in Proposition 4.5 (see also Remark 3.12) in \cite{DP12} that if $\omega\in L^2(M)$ then $\omega$ is a harmonic field if and only if $\omega$ is harmonic. 
\end{remark}

The next results with a general curvature condition is a direct consequence of Theorem \ref{thm2} and the above Kato inequality.
\begin{theorem}\label{theo1}
	Let $(M,g)$ be a complete non-compact K\"ahler manifold of complex dimension $n$. Assume that  $M$ satisfies a weighted Poincar\'e inequality with a nonnegative weight function $\rho$ with ~\eqref{eq:C1} and ~\eqref{eq:C2}. Denote {$\mu_{1} \leq \ldots \leq \mu_{n^2} $} eigenvalues of the curvature operator of $(M,g)$. For $p\neq q$, if 
	$$
	\frac{\mu_1+\ldots+\mu_{\lfloor C^{p,q}\rfloor}+(C^{p,q}-\lfloor C^{p,q}\rfloor)\mu_{\lfloor C^{p,q}\rfloor+1}}{C^{p,q}+1} \geq -\kappa\rho
	$$ 
	then every $(p,q)$-harmonic field $\omega$ vanishes provided that $|\omega| \in L^Q(M)$ and 
	$$0< \kappa < \frac{4\left(Q+\frac{1}{D^{p,q}}-3\right)}{(n+2-|p-q|(p+q))Q^2} .$$
\end{theorem}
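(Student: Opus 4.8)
The plan is to obtain Theorem~\ref{theo1} by feeding the refined Kato inequality for $(p,q)$-harmonic fields into the vanishing mechanism of Theorem~\ref{thm2}, after converting the normalized eigenvalue hypothesis into a pointwise lower bound for the Weitzenb\"ock curvature term. First I would observe that a $(p,q)$-harmonic field $\omega$ satisfies $d\omega=d^*\omega=0$, hence is harmonic with respect to the Hodge Laplacian, i.e.\ with respect to the Lichnerowicz Laplacian for $c=1$. Thus the Bochner formula of Section~\ref{sec0} applies in the form $\Delta\frac12|\omega|^2=|\nabla\omega|^2+g(\mathrm{Ric}(\omega),\overline{\omega})$, and by Proposition~\ref{ric} the Weitzenb\"ock term equals $g(\mathfrak{R}_{|\mathfrak u(m)}(\omega^{\mathfrak u}),\overline{\omega}^{\mathfrak u})$.

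Next I would turn the curvature hypothesis into the lower bound required by Theorem~\ref{thm2}. Since $p\neq q$, Definition~\ref{def25} gives $\overset{\circ}{\omega}=\omega$, so $|\overset{\circ}{\omega}|=|\omega|$. Applying Proposition~\ref{prop1}(3) pointwise, with the (nonpositive) constant there replaced at each point by $-\kappa\rho(x)$, the assumption
$$\frac{\mu_1+\ldots+\mu_{\lfloor C^{p,q}\rfloor}+(C^{p,q}-\lfloor C^{p,q}\rfloor)\mu_{\lfloor C^{p,q}\rfloor+1}}{C^{p,q}+1}\geq-\kappa\rho$$
yields $g(\mathfrak{R}_{|\mathfrak u(m)}(\omega^{\mathfrak u}),\overline{\omega}^{\mathfrak u})\geq-\kappa\rho\,(n+2-|p-q|)(p+q)\,|\omega|^2$. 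In other words, $\omega$ meets the curvature condition of Theorem~\ref{thm2} with the effective weight constant $\widetilde{\kappa}=\kappa\,(n+2-|p-q|)(p+q)$. The one point demanding care here is the bookkeeping between the exact normalization $C^{p,q}+1$ used in the statement and the integer normalization $\lfloor C^{p,q}\rfloor+1$ appearing in Proposition~\ref{prop1} and in the remark preceding Lemma~\ref{lem1}; since $-\kappa\rho\leq0$ and $\lfloor C^{p,q}\rfloor+1\leq C^{p,q}+1$, this only costs a harmless weakening of the hypothesis, which I would absorb into the constant.

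Finally I would invoke the refined Kato inequality for $(p,q)$-harmonic fields recorded in Remark~\ref{remd}, namely $|\nabla\omega|^2\geq\frac{1}{D^{p,q}}|\nabla|\omega||^2$, which is precisely the extra input that Remark~\ref{kato} uses to enlarge the admissible range of the weight constant. Applying Theorem~\ref{thm2} with $c=1$ and this Kato improvement, the field $\omega$ vanishes provided $0<\widetilde\kappa<\frac{4\left(Q+\frac1{D^{p,q}}-3\right)}{Q^2}$. Substituting $\widetilde\kappa=\kappa\,(n+2-|p-q|)(p+q)$ and solving for $\kappa$ produces exactly the stated bound
$$0<\kappa<\frac{4\left(Q+\frac1{D^{p,q}}-3\right)}{(n+2-|p-q|)(p+q)\,Q^2}.$$
The substantive work is entirely contained in Theorem~\ref{thm2} and in the Kato estimate; the main obstacle, and the only genuinely new step, is the constant-tracking in the middle paragraph that identifies the effective weight $\widetilde\kappa$, together with checking that the $L^Q$ cut-off argument of Theorem~\ref{thm2} (which uses only completeness, the weighted Poincar\'e inequality, and nonparabolicity) applies verbatim to a harmonic field that is merely $L^Q$ rather than $L^2$.
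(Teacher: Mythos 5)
Your proposal follows the paper's own proof step for step: the paper likewise treats the $(p,q)$-harmonic field as Hodge-harmonic (Lichnerowicz Laplacian with $c=1$), applies the weighted Bochner inequality of Lemma~\ref{lem1} to produce the effective curvature weight $\kappa(n+2-|p-q|)(p+q)\rho$, and then concludes by the proof of Theorem~\ref{thm2} together with Remarks~\ref{kato} and~\ref{remd}. In that sense your argument is essentially the published one, with the constants made explicit.

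The one caveat concerns precisely the step you single out as ``demanding care'': your claim about the $C^{p,q}+1$ versus $\lfloor C^{p,q}\rfloor+1$ normalization is backwards. Write $S=\mu_1+\ldots+\mu_{\lfloor C^{p,q}\rfloor}+(C^{p,q}-\lfloor C^{p,q}\rfloor)\mu_{\lfloor C^{p,q}\rfloor+1}$. Since $-\kappa\rho\le0$ and $\lfloor C^{p,q}\rfloor+1\le C^{p,q}+1$, one has $-\kappa\rho\,(C^{p,q}+1)\le-\kappa\rho\,(\lfloor C^{p,q}\rfloor+1)$, so the theorem's hypothesis $S\ge-\kappa\rho\,(C^{p,q}+1)$ is \emph{weaker} than the hypothesis $S\ge-\kappa\rho\,(\lfloor C^{p,q}\rfloor+1)$ required by Proposition~\ref{prop1}(3) and Lemma~\ref{lem1}; it is not a surplus that can be harmlessly absorbed. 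To apply the proposition one must rescale, taking $\kappa'=\kappa\,\frac{C^{p,q}+1}{\lfloor C^{p,q}\rfloor+1}$, and tracking this factor through Theorem~\ref{thm2} and Remark~\ref{kato} gives the admissible range
\begin{equation*}
0<\kappa<\frac{\lfloor C^{p,q}\rfloor+1}{C^{p,q}+1}\cdot\frac{4\left(Q+\frac{1}{D^{p,q}}-3\right)}{(n+2-|p-q|)(p+q)\,Q^{2}},
\end{equation*}
which coincides with the stated bound only when $C^{p,q}$ is an integer. To be fair, the paper's own two-line proof silently makes the identical conflation (the theorem's hypothesis is normalized by $C^{p,q}+1$ while Lemma~\ref{lem1} is stated with $\lfloor C^{p,q}\rfloor+1$), so your write-up is no less rigorous than the published one; but the assertion that the mismatch ``only costs a harmless weakening of the hypothesis'' is false as stated, and the honest fixes are either to accept the degraded constant above or to restate the hypothesis with the floor normalization throughout. (Incidentally, you also silently corrected the misprinted parenthesis in the stated bound, reading $(n+2-|p-q|(p+q))$ as $(n+2-|p-q|)(p+q)$, which is indeed what the Bochner formula produces.)
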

\begin{proof}
	Since $\omega$ is a $(p,q)$-harmonic field, using the Bochner formula in Lemma \ref{lem1}, we obtain that 
$$\Delta\frac{1}{2}|\varphi|^2\geq|\nabla\varphi|^2+\kappa (n+2-|p-q|)(p+q)\rho|\overset{\circ}{\varphi}|^2.$$
	Following the proof of Theorem \ref{thm2}, Remark \ref{kato} and Remark \ref{remd}, we complete the proof. 
\end{proof}
Observe that if $\omega\in L^2(M)$ and $\omega$ is harmonic then $\omega$ is a harmonic field. Therefore, Remark \ref{remd} and Theorem \ref{theo1} imply the following corollary.
\begin{corollary}\label{col1}
	Let $(M,g)$ be a complete non-compact K\"ahler manifold of complex dimension $n$. Assume that  $M$ satisfies a weighted Poincar\'e inequality with a nonnegative weight function $\rho$ with ~\eqref{eq:C1} and ~\eqref{eq:C2}. Denote {$\mu_{1} \leq \ldots \leq \mu_{n^2} $} eigenvalues of the curvature operator of $(M,g)$. For $p\neq q$, if 
	$$
	\frac{\mu_1+\ldots+\mu_{\lfloor C^{p,q}\rfloor}+(C^{p,q}-\lfloor C^{p,q}\rfloor)\mu_{\lfloor C^{p,q}\rfloor+1}}{C^{p,q}+1} \geq -\kappa\rho
	$$ 
	then every $L^2$ harmonic $(p,q)$-form $\omega$ vanishes provided that 
	$$0< \kappa < \frac{\frac{1}{D^{p,q}}-1}{n+2-|p-q|(p+q)} .$$
\end{corollary}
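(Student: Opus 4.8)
The plan is to derive Corollary~\ref{col1} as a specialization of Theorem~\ref{theo1}, using the observation stated just before the corollary: when $\omega \in L^2(M)$ is harmonic, Remark~\ref{remd} guarantees that $\omega$ is in fact a $(p,q)$-harmonic field, so Theorem~\ref{theo1} applies directly with $Q = 2$. First I would set $Q = 2$ in the hypothesis of Theorem~\ref{theo1}. The curvature condition
$$
\frac{\mu_1+\ldots+\mu_{\lfloor C^{p,q}\rfloor}+(C^{p,q}-\lfloor C^{p,q}\rfloor)\mu_{\lfloor C^{p,q}\rfloor+1}}{C^{p,q}+1} \geq -\kappa\rho
$$
is identical in both statements, so nothing needs to be adjusted there. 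The only real content is to verify that the admissible range of $\kappa$ in Theorem~\ref{theo1} collapses to the stated range in the corollary when $Q=2$.

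Next I would carry out this reduction of the $\kappa$-bound explicitly. Substituting $Q = 2$ into the upper bound from Theorem~\ref{theo1} gives
$$
\kappa < \frac{4\left(2+\frac{1}{D^{p,q}}-3\right)}{(n+2-|p-q|(p+q))\cdot 4} = \frac{\frac{1}{D^{p,q}}-1}{n+2-|p-q|(p+q)},
$$
where the factor $4$ in the numerator cancels against $Q^2 = 4$ in the denominator, and $Q + \frac{1}{D^{p,q}} - 3 = \frac{1}{D^{p,q}} - 1$. This matches the bound claimed in Corollary~\ref{col1}, so the range of admissible $\kappa$ is exactly the $Q=2$ instance of the range in Theorem~\ref{theo1}.

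With these two observations in hand, the proof assembles immediately: given an $L^2$ harmonic $(p,q)$-form $\omega$ with $p \neq q$, Remark~\ref{remd} upgrades it to a $(p,q)$-harmonic field, so $|\omega| \in L^2(M) = L^Q(M)$ with $Q = 2$, and the curvature hypothesis together with the bound $0 < \kappa < \frac{\frac{1}{D^{p,q}}-1}{n+2-|p-q|(p+q)}$ places us precisely in the hypotheses of Theorem~\ref{theo1} for $Q = 2$. Theorem~\ref{theo1} then forces $\omega \equiv 0$.

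I expect the only point requiring any care to be the algebraic simplification of the $\kappa$-bound under $Q=2$, which is routine; the conceptual step, namely invoking Remark~\ref{remd} to pass from an $L^2$ harmonic form to a harmonic field, is already spelled out in the sentence preceding the corollary and presents no genuine obstacle. Thus the main work is simply to record the substitution and cite Theorem~\ref{theo1} and Remark~\ref{remd}.
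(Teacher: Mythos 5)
Your proposal is correct and is essentially the paper's own argument: the paper derives Corollary~\ref{col1} precisely by observing that an $L^2$ harmonic $(p,q)$-form is a harmonic field (via Remark~\ref{remd}, citing \cite{DP12}) and then applying Theorem~\ref{theo1} with $Q=2$. Your algebraic reduction of the $\kappa$-bound, $\frac{4\left(2+\frac{1}{D^{p,q}}-3\right)}{4\left(n+2-|p-q|(p+q)\right)}=\frac{\frac{1}{D^{p,q}}-1}{n+2-|p-q|(p+q)}$, is exactly the substitution the paper's statement encodes.
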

When $p=q$, we also can obtain a vanishing result as follows.
\begin{corollary}
	Let $(M,g)$ be a complete non-compact K\"ahler manifold of complex dimension $n$. Assume that  $M$ satisfies a weighted Poincar\'e inequality with a nonnegative weight function $\rho$ with ~\eqref{eq:C1} and ~\eqref{eq:C2}. Denote {$\mu_{1} \leq \ldots \leq \mu_{n^2} $} eigenvalues of the curvature operator of $(M,g)$. If 
	$$
	\frac{\mu_1+\ldots+\mu_{\lfloor C^{p,p}\rfloor}+(C^{p,p}-\lfloor C^{p,p}\rfloor)\mu_{\lfloor C^{p,p}\rfloor+1}}{C^{p,p}+1} \geq -\kappa\rho
	$$ 
	then every $L^2$ harmonic $(p,p)$-form $\omega$ vanishes provided that $\omega\perp\Omega$ and 
	$$0< \kappa < \frac{\frac{1}{D^{p,q}}-1}{n+2} .$$
\end{corollary}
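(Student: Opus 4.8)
The plan is to repeat, in the $p=q$ setting, the passage from Theorem~\ref{theo1} to Corollary~\ref{col1}, the only structural novelty being that the primitivity hypothesis $\omega\perp\Omega$ is now what allows one to replace $\overset{\circ}{\omega}$ by $\omega$. First I would fix an $L^2$ harmonic $(p,p)$-form $\omega$ with $\omega\perp\Omega$. Since $\omega$ is harmonic and lies in $L^2(M)$, Remark~\ref{remd} (Proposition~4.5 of \cite{DP12}) shows that $\omega$ is in fact a harmonic field, so that $\partial\omega=\partial^{*}\omega=\bar\partial\omega=\bar\partial^{*}\omega=0$; this is precisely what is needed for the refined Kato inequality $|\nabla\omega|^2\geq\frac{1}{D^{p,q}}|\nabla|\omega||^2$ of Remark~\ref{remd} to be available. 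Moreover, because $p=q$ and $\omega\perp\Omega$, Definition~\ref{def25} (cf.\ the remark following Theorem~\ref{thm3}) gives $\overset{\circ}{\omega}=\omega$, hence $|\overset{\circ}{\omega}|=|\omega|$ pointwise.

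With these two reductions I would convert the eigenvalue hypothesis into a weighted Bochner inequality. Feeding the assumption
$$\frac{\mu_1+\ldots+\mu_{\lfloor C^{p,p}\rfloor}+(C^{p,p}-\lfloor C^{p,p}\rfloor)\mu_{\lfloor C^{p,p}\rfloor+1}}{C^{p,p}+1}\geq-\kappa\rho$$
into the third part of Proposition~\ref{prop1} (applied pointwise with the constant there taken to be $-\kappa\rho(x)\leq 0$), and combining with Proposition~\ref{ric} and the Bochner formula of Lemma~\ref{lem1}, I obtain
$$\Delta\frac{1}{2}|\omega|^2\geq|\nabla\omega|^2-\kappa\,(n+2-|p-q|)(p+q)\,\rho\,|\omega|^2,$$
where $|\overset{\circ}{\omega}|=|\omega|$ has been used and where $|p-q|=0$ produces the coefficient relevant to the statement. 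From here the argument is verbatim that of Theorem~\ref{thm2} and Theorem~\ref{theo1}: one multiplies by $\varphi^2|\omega|^{q}$ for a cutoff $\varphi$, integrates by parts, and absorbs the gradient terms using the weighted Poincar\'e inequality together with the refined Kato inequality. Setting $Q=2$ and $c=1$ in the resulting admissible range exactly as in Corollary~\ref{col1}, then letting the cutoff radius tend to infinity and invoking \eqref{eq:C1} and \eqref{eq:C2}, forces $|\omega|$ to be constant and finally to vanish, since by \eqref{eq:C2} the volume of $M$ is infinite.

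The analytic core---the integration-by-parts estimate and the nonparabolicity/volume conclusion---is identical to Theorem~\ref{thm2} and introduces nothing new. I expect the main obstacle to be the bookkeeping of constants in the $p=q$ specialization: one must check that the primitivity $\omega\perp\Omega$ indeed yields $\overset{\circ}{\omega}=\omega$ so that the curvature term in Lemma~\ref{lem1} is genuinely controlled by $|\omega|^2$, and then track how the coefficient $(n+2-|p-q|)(p+q)$ of that lemma collapses when $|p-q|=0$ so that the admissible threshold takes the stated form with denominator $n+2$. Verifying the harmonic field property, which alone licenses the sharper Kato constant $D^{p,q}$, is the other point requiring care, but it is supplied directly by Remark~\ref{remd}.
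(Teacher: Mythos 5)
Your proposal is correct and follows essentially the same route the paper intends: the paper gives no separate proof for the $p=p$ case, but derives it exactly as you do, by combining the observation (stated before Theorem~\ref{thm3a}) that $\omega\perp\Omega$ forces $\overset{\circ}{\omega}=\omega$ with the harmonic-field/refined-Kato reduction of Remark~\ref{remd} and the weighted Poincar\'e argument of Theorem~\ref{thm2}/Theorem~\ref{theo1}, then specializing to $Q=2$, $c=1$ so the threshold becomes $\bigl(\tfrac{1}{D^{p,q}}-1\bigr)/(n+2)$. Nothing further is needed.
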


Finally, the above corollary infers following vanishing result for reduced $L^2$ cohomology groups.
\begin{corollary}
	Let $(M,g)$ be a complete non-compact K\"ahler manifold of complex dimension $n$. Assume that  $M$ satisfies a weighted Poincar\'e inequality with a nonnegative weight function $\rho$ with ~\eqref{eq:C1} and ~\eqref{eq:C2}. Denote {$\mu_{1} \leq \ldots \leq \mu_{n^2} $} eigenvalues of the curvature operator of $(M,g)$. For $p\neq q$, if 
	$$
	\frac{\mu_1+\ldots+\mu_{\lfloor C^{p,q}\rfloor}+(C^{p,q}-\lfloor C^{p,q}\rfloor)\mu_{\lfloor C^{p,q}\rfloor+1}}{C^{p,q}+1} \geq -\kappa\rho
	$$ 
	then every harmonic $(p,q)$-form $\omega$, for all $1 \leq \ell \leq n-1$ vanishes provided that $|\omega| \in L^2(M)$ for some $\kappa$ satisfying
	$$ \kappa <  \frac{\frac{1}{D^{p,q}}-1}{n+2-|p-q|(p+q)}.$$
	Consequently, every reduced $L^2$-cohomology groups $\mathcal{H}^\ell(M)$ are trivial if $\ell$ is odd.
\end{corollary}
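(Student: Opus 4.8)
The plan is to deduce this corollary directly from Corollary~\ref{col1} together with the type decomposition of $L^2$ harmonic forms on a complete K\"ahler manifold. First I would record the pointwise orthogonal decomposition $\Lambda^\ell T^*M = \bigoplus_{p+q=\ell} \Lambda^{p,q} T^*M$ and recall that, because of the K\"ahler identities, the projections onto $(p,q)$-type are parallel and commute with the Hodge Laplacian; consequently they are bounded operators on $L^2$ that preserve the conditions $d\omega = \delta\omega = 0$. Thus any $\omega \in \mathcal{H}^\ell(M)$ splits orthogonally as $\omega = \sum_{p+q=\ell}\omega^{p,q}$, where each component $\omega^{p,q}$ is again an $L^2$ harmonic $(p,q)$-form. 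By Serre duality we may restrict attention to the range $p + q = \ell \leq n$.

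Next, I would exploit the parity of $\ell$. When $\ell$ is odd, no pair $(p,q)$ with $p + q = \ell$ can satisfy $p = q$; hence every component $\omega^{p,q}$ has $p \neq q$. For each such component the hypotheses are precisely those of Corollary~\ref{col1}: the curvature assumption $\frac{\mu_1+\ldots+\mu_{\lfloor C^{p,q}\rfloor}+(C^{p,q}-\lfloor C^{p,q}\rfloor)\mu_{\lfloor C^{p,q}\rfloor+1}}{C^{p,q}+1} \geq -\kappa\rho$ holds and $\kappa$ lies below the stated threshold $\frac{\frac{1}{D^{p,q}}-1}{n+2-|p-q|(p+q)}$. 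Invoking Corollary~\ref{col1} (equivalently Theorem~\ref{theo1}, after using Remark~\ref{remd} to identify $L^2$ harmonic $(p,q)$-forms with $(p,q)$-harmonic fields), each $\omega^{p,q}$ vanishes, and therefore $\omega = 0$.

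Finally, the vanishing of every $L^2$ harmonic $\ell$-form for odd $\ell$ gives $\mathcal{H}^\ell(M) = 0$. Since on a complete manifold the reduced $L^2$ cohomology group in degree $\ell$ is isomorphic to the space $\mathcal{H}^\ell(M)$ of $L^2$ harmonic $\ell$-forms (following \cite{Car}, as recalled before the present corollary), the reduced $L^2$ cohomology groups in odd degrees are trivial, as claimed.

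I expect the only genuine subtlety to be the first step: one must check that passing to $(p,q)$-components destroys neither square-integrability nor the harmonic-field property. This is exactly where the K\"ahler hypothesis is essential, since the parallelism of $J$ makes the type projections parallel, so that they commute with $\nabla^*\nabla$ as well as with $d$ and $\delta$ and keep each $\omega^{p,q}$ both in $L^2$ and harmonic. With that secured, the remainder is a bookkeeping reduction to Corollary~\ref{col1}.
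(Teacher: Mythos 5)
Your proposal is correct and follows essentially the same route as the paper: the paper derives this corollary directly from Corollary~\ref{col1} together with the type decomposition $\Lambda^\ell T^*M=\bigoplus_{p+q=\ell}\Lambda^{p,q}T^*M$ recorded earlier, noting that odd $\ell$ forces $p\neq q$ in every component. Your extra care in checking that the type projections preserve square-integrability and harmonicity (via the K\"ahler identities) is a point the paper leaves implicit, but it is the same argument.
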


\section{Geometric applications}
\begin{theorem}\label{thm:app1}
Suppose that $(M,g)$ is a complete non-compact K\"{a}hler-Einstein manifold of complex dimension $n\geq4$. If $M$ satisfies a weighted Poincar\'e inequality and
$$\mu_1+\ldots+\mu_{\lfloor\frac{n+1}{2}\rfloor}+\frac{1+(-1)^n}{4}\mu_{\lfloor\frac{n+1}{2}\rfloor+1}\geq-k\rho$$
then $M$ is Riemannian flat provided that its $L^Q$-norm is finite, where $0\leq k<\frac{Q-1}{Q^2}$, $Q\geq2$.
\end{theorem}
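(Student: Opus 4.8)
The plan is to apply the vanishing criterion of Theorem~\ref{thm2} directly to the full Riemannian curvature tensor $\Rm$, viewed as a $(0,4)$-tensor, with $c=\tfrac12$. First I would record that a K\"ahler--Einstein manifold has harmonic curvature tensor: being Einstein, its Ricci tensor is Codazzi, so $\Rm$ is divergence free, and hence (as in the example of Section~\ref{sec0}) it satisfies $\nabla^{*}\nabla\Rm+\tfrac12\Ric(\Rm)=0$, i.e. $\Delta_L\Rm=0$ with $c=\tfrac12$. Thus $\Rm$ is a harmonic tensor, and by Proposition~\ref{ric} the Bochner formula reads $\Delta\tfrac12|\Rm|^2=|\nabla\Rm|^2+\tfrac12\,g(\mathfrak{R}_{|\mathfrak u}(\Rm^{\mathfrak u}),\overline{\Rm}^{\mathfrak u})$, which is exactly the input the proof of Theorem~\ref{thm2} digests.

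The second step is to convert the eigenvalue hypothesis into the curvature lower bound required by Theorem~\ref{thm2}. The key point is that the structural constant attached to the K\"ahler curvature tensor in Lemma~\ref{lem2} is $C=\tfrac{n+1}{2}$: one checks $\lfloor\tfrac{n+1}{2}\rfloor=\lfloor C\rfloor$ and $\tfrac{1+(-1)^n}{4}=C-\lfloor C\rfloor$ (the fractional part vanishing precisely when $n$ is odd, so that $C$ is an integer), which is why these quantities occur in the stated hypothesis. Since $n\geq4$ gives $C=\tfrac{n+1}{2}\geq1$, I would apply Lemma~\ref{lem2}(1) pointwise to $T=\Rm$ with $\ell=\lfloor C\rfloor$ and $\kappa=-k\rho/(\ell+1)$, obtaining $g(\mathfrak{R}_{|\mathfrak u}(\Rm^{\mathfrak u}),\overline{\Rm}^{\mathfrak u})\geq -\tfrac{k\rho}{C}|\Rm^{\mathfrak u}|^2$. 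Because $M$ is Einstein, Lemma~\ref{lem3} simplifies to $|\Rm^{\mathfrak u}|^2=4(n+1)|\overset{\circ}{R}|^2$; combining this with $|\overset{\circ}{R}|^2\leq|R|^2=\tfrac14|\Rm|^2$ and $C=\tfrac{n+1}{2}$ yields a bound of the form $g(\mathfrak{R}_{|\mathfrak u}(\Rm^{\mathfrak u}),\overline{\Rm}^{\mathfrak u})\geq -2k\rho\,|\Rm|^2$, i.e. the hypothesis of Theorem~\ref{thm2} holds for $\Rm$ with $\kappa=2k$.

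Finally I would invoke Theorem~\ref{thm2} with $T=\Rm$ and $c=\tfrac12$: the weighted Poincar\'e inequality together with \eqref{eq:C1}--\eqref{eq:C2}, the curvature bound just established, and the finiteness of the $L^Q$-norm of $\Rm$ force $\Rm\equiv0$, so that $(M,g)$ is Riemannian flat. The admissibility condition of Theorem~\ref{thm2} is $\kappa<\tfrac{4(Q-1)}{cQ^2}=\tfrac{8(Q-1)}{Q^2}$, and since we found $\kappa=2k$, the stated range $0\leq k<\tfrac{Q-1}{Q^2}$ gives $\kappa=2k<\tfrac{2(Q-1)}{Q^2}<\tfrac{8(Q-1)}{Q^2}$, so the criterion indeed applies. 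The main obstacle is exactly this bookkeeping of constants: one must carefully track the normalization in passing from the eigenvalue sum to $g(\mathfrak{R}_{|\mathfrak u}(\Rm^{\mathfrak u}),\overline{\Rm}^{\mathfrak u})$, identify $C=\tfrac{n+1}{2}$ as the correct constant in Lemma~\ref{lem2}, and control $|\Rm^{\mathfrak u}|^2$ by $|\Rm|^2$ sharply via Lemma~\ref{lem3} in the Einstein case (the refined Kato inequality of Remark~\ref{kato} could be used to widen the range further, though it is not needed here). A secondary point worth confirming is that applying the criterion to $\Rm$ itself, rather than only to its trace-free part, genuinely yields flatness: $\Rm\equiv0$ is flatness by definition, so one need not separately argue away the constant-holomorphic-sectional-curvature alternative.
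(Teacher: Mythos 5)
Your overall route is the same as the paper's: you observe that K\"ahler--Einstein implies $\Delta_L\Rm=0$ with $c=\tfrac12$, you feed the eigenvalue hypothesis into Lemma~\ref{lem2} with the constant $C=\tfrac{n+1}{2}$ (whose floor and fractional part indeed explain the form of the hypothesis), and you close with Theorem~\ref{thm2}. But the constant bookkeeping that you yourself flag as the main obstacle contains two genuine problems. First, you never verify the hypothesis of Lemma~\ref{lem2}, namely $|L\Rm|^{2}\leq\tfrac{2}{n+1}|\Rm^{\mathfrak u}|^{2}|L|^{2}$ for all $L\in\mathfrak u(n)$; asserting that $C=\tfrac{n+1}{2}$ is ``the structural constant'' is not a derivation. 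The paper obtains this inequality by combining the external estimate $|L\Rm|^{2}\leq 8|\overset{\circ}{\Rm}|^{2}|L|^{2}$ (Lemma 2.2 of \cite{PW21}, which is not among the preliminaries reproduced in Section~\ref{sec0}) with Lemma~\ref{lem3} in the Einstein case, where $\overset{\circ}{\Ric}=0$ gives $|\Rm^{\mathfrak u}|^{2}=4(n+1)|\overset{\circ}{\Rm}|^{2}$. Since Proposition~\ref{prop0} covers only forms, not curvature tensors, this verification cannot be skipped.

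Second, your conversion of the Lemma~\ref{lem2} output into a bound against $|\Rm|^{2}$ mixes two normalizations and is off by a factor of $4$. Lemma~\ref{lem3} is homogeneous: either all norms are taken of the $(0,4)$-tensor $\Rm$, or all are taken of the associated bilinear form $R$ (recall $|\Rm|^{2}=4|R|^{2}$); you read the left-hand side as $|\Rm^{\mathfrak u}|^{2}$ but the right-hand side in the $R$-normalization, concluding $|\Rm^{\mathfrak u}|^{2}\leq 4(n+1)\cdot\tfrac14|\Rm|^{2}=(n+1)|\Rm|^{2}$. That inequality is false in general: for a Ricci-flat K\"ahler curvature tensor one has $\overset{\circ}{\Rm}=\Rm$, hence $|\Rm^{\mathfrak u}|^{2}=4(n+1)|\Rm|^{2}$, four times your bound (this reading is also forced by consistency with the constant $C=\tfrac{n+1}{2}$ you use, which lives in the $\Rm$-normalization). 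The correct conversion, as in the paper, is $|\Rm^{\mathfrak u}|^{2}=4(n+1)|\overset{\circ}{\Rm}|^{2}\leq 4(n+1)|\Rm|^{2}$, so the curvature lower bound is $g(\mathfrak{R}_{|\mathfrak u}(\Rm^{\mathfrak u}),\overline{\Rm}^{\mathfrak u})\geq-8k\rho|\Rm|^{2}$, i.e.\ $\kappa=8k$, not $2k$. The theorem survives because Theorem~\ref{thm2} with $c=\tfrac12$ demands $\kappa<\tfrac{8(Q-1)}{Q^{2}}$, and $8k<\tfrac{8(Q-1)}{Q^{2}}$ is exactly the stated range $k<\tfrac{Q-1}{Q^{2}}$ --- but there is no room to spare: the apparent slack in your last step is illusory, and the hypothesis could not be relaxed to $k<\tfrac{4(Q-1)}{Q^{2}}$ by this argument.
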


	\begin{proof}
		
	As we mentioned in Example \ref{rmk1}.(b) that the curvature tensor of Einstein manifold is harmonic with respect to Lichnerowicz Laplacian $\Delta_{L} =\nabla^{*} \nabla + \frac{1}{2} \Ric$. 
	
	Since $M$ is Einstein, $\stackrel{\circ}{\Ric}=0$, so Lemma \ref{lem3} follows that  {$|{\mathrm{Rm}}^\mathfrak{u}|^{2}=4(n+1)|\stackrel{\circ}{\Rm}|^{2}$} and thus Lemma 2.2 in \cite{PW21} implies 
	\begin{equation}\label{eq7}
		|L \mathrm{Rm}|^{2} \leq8|\stackrel{\circ}{\Rm}|^{2}|L|^{2}=\frac{2}{n+1}{|{\mathrm{Rm}}^\mathfrak{u}|^{2}}|L|^{2}.
	\end{equation}
	for all $L \in \mathfrak{u}(n)$. By the assumption on the eigenvalues of the K\"ahler curvature operator and Lemma \ref{lem2} implies 
	$$g({\rm Ric}({\rm Rm}), \overline{\rm Rm})=
	{g(\mathfrak{R}_{|\mathfrak{u}}({\mathrm{Rm}^\mathfrak{u}}), \overline{\mathrm{Rm}^\mathfrak{u}}) \geq -\frac{2k\rho}{n+1}|\mathrm{Rm}^\mathfrak{u}|^2}\geq-8k\rho|\rm Rm|^2.
	$$
Here, we used ${|{\mathrm{Rm}}^\mathfrak{u}|^{2}}=4(n+1)|\stackrel{\circ}{\Rm}|^{2}\leq4(n+1)|{\rm  Rm}|^{2}$ since $M$ is Einstein. Applying Theorem \ref{thm2}, we obtain that $\Rm$ is vanishing. This means $M$ is flat.  
\end{proof}

	\begin{proof}[Proof of Theorem~\ref{d2}]
	
	Combining Lemma 2.2 in \cite{PW21} and Lemma 5.2 in \cite{PW21cre}, we have
	\begin{equation}
	| L B|^{2} \leq8|B|^{2}|L|^{2}=\frac{2}{n+1}|B^\mathfrak{u}|^{2}|L|^{2}.
	\end{equation}
	for all {$L \in \mathfrak{u}(n)$}. By the assumption on the eigenvalues of the K\"ahler curvature operator and Lemma \ref{lem2} implies 
	$$
		g(\Re(B^\mathfrak{u}), B^\mathfrak{u}) \geq -\frac{2k}{n+1}|B^\mathfrak{u}|^2\geq-8k\rho|B|^2.
	$$
	Applying Theorem \ref{thm2}, $B$ is vanishing. This means $M$ is Bochner flat, hence, using the clasification of Bochner flat geometry as in Theorem 5.7 in \cite{kam}, we complete the proof of the first conclusion. 
	
	Now, suppose that $M$ is either complex Euclidean geometry; or product of complex heperbolic and projective geometry. Then, $M$ has constant scalar curvature. By Theorem 1.1 in \cite{Kim09}, this together with the fact that $M$ has vanishing Bochner tensor and is irreducible implies $M$ is K\"{a}hler-Einstein. Hence, due to Theorem 1.2 in \cite{yanhui2012}, $M$ has constant holomorphic sectional curvature. The proof is complete.
\end{proof}
Recall that an $n$-dimensional K\"{a}hler manifold $(M^n, g)$ is called a gradient K\"{a}hler-Ricci soliton if there is a real-valued smooth function $f$ satisfying the soliton equation
$${R}_{i\bar{j}}+ \nabla_i\nabla_{\bar{j}}f = \lambda g_{i\bar{j}},$$
for some constant $\lambda\in\mathbb{R}$ and such that $\nabla f$ is a holomorphic vector field, i.e. $\nabla_i\nabla_jf=0$. Following the proof of Theorem \ref{d2}, we obtain the below proposition.
\begin{proof}[Proof of Proposition~\ref{d3}]
	Since the Bochner tensor has finite $L^Q$-norm, it must be vanishing. Hence the proof now follows by Theorem 1.2 in \cite{yanhui2012}.
\end{proof}
The final application is a rigidity result on quaternionic K\"{a}hler manifolds.
\begin{theorem}\label{quaternion}
Suppose that $(M,g)$ is a complete non-compact quaternion K\"{a}hler manifold of complex dimension $4m\geq8$. Let $\mu_1\leq\ldots\leq\mu_{m(2m+1)+3}$ denote the eigenvalues of the corresponding quaternion K\"{a}hler curvature operator. Suppose that the scalar curvature of $M$ is vanishing. If $M$ satisfies a weighted Poincar\'e inequality and
$$\mu_1+\ldots+\mu_{\lfloor\frac{m+1}{2}\rfloor}+\frac{5+3.(-1)^m}{12}{\mu}_{\lfloor\frac{m+1}{2}\rfloor+1}\geq-k\rho$$
then $M$ is Riemannian flat provided that the curvature tensor $R$ has finite $L^Q$-norm, where $0\leq k<\frac{Q-1}{Q}$, $Q\geq2$.
	\begin{proof}
	
	Quaternion K\"ahler manifolds in real dimension $4m\geq 8$ are Einstein. Hence the curvature tensor $R$ is harmonic and thus satisfies the Bochner formula 
$$\Delta \frac{1}{2}|R|^{2}=|\nabla R|^{2}+\frac{1}{2} \cdot g(\mathfrak{R}(R^{\mathfrak{sp}(m)\oplus \mathfrak{sp}(1)}), R^{\mathfrak{sp}(m)\oplus \mathfrak{sp}(1)}).$$
	
By Lemma \ref{lem4} and Lemma 2.2 in \cite{PW21}, we have 
\begin{equation}
| L R|^{2}=|LR_0|^2\leq8|L|^{2}|R_0|^{2}=\frac{6}{3m+4}|R^{\mathfrak{sp}(m)\oplus \mathfrak{sp}(1)}|^{2}|L|^{2}.
\end{equation}
for all $L \in \mathfrak{sp}(m)\oplus \mathfrak{sp}(1)$. By the assumption on the eigenvalues of the quaternion K\"ahler curvature operator and Lemma \ref{lem2} implies 
$$
g(\mathfrak{R}(R^{\mathfrak{sp}(m)\oplus \mathfrak{sp}(1)}), R^{\mathfrak{sp}(m)\oplus \mathfrak{sp}(1)})\geq-\frac{-6k\rho}{3m+4}|R^{\mathfrak{sp}(m)\oplus \mathfrak{sp}(1)}|^2\geq-8k\rho|R_0|^2=-8k\rho|R|^2.
$$
Here in the last inequality, we used the assumption that the scalar curvature is zero to infer $|R_0|=|R|$ (see the proof of Corollary 4.5 in \cite{PW}). Applying Theorem \ref{thm2}, $ R=0$. Hence, $M$ must be flat. 
\end{proof}
\begin{remark}
	If $k=0$, we always have 
	$$g(\mathfrak{R}(R^{\mathfrak{sp}(m)\oplus \mathfrak{sp}(1)}), R^{\mathfrak{sp}(m)\oplus \mathfrak{sp}(1)})\geq-\frac{-6k\rho}{3m+4}|R^{\mathfrak{sp}(m)\oplus \mathfrak{sp}(1)}|^2=0.$$ Hence, we can remove the condition on scalar curvature in the statement of Theorem \ref{quaternion}.
\end{remark}

\end{theorem}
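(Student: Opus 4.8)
The plan is to realize the curvature tensor $R$ as a harmonic $(0,4)$-tensor for the Lichnerowicz Laplacian and then feed it into the quaternionic vanishing result Theorem~\ref{thm2Q}, so that the whole argument reduces to three checks: (i) that $R$ is harmonic, (ii) that the pointwise pinching hypothesis $|LR|^2\leq\frac1C|R^\mathfrak{sp}|^2|L|^2$ required by Lemma~\ref{lem2} holds, and (iii) that the spectral assumption on the $\mathfrak{sp}(m)\oplus\mathfrak{sp}(1)$-eigenvalues converts into a Weitzenb\"ock lower bound of exactly the homogeneous shape $-8k\rho|R|^2$ on which Theorem~\ref{thm2Q} operates.

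First I would recall that a quaternion K\"ahler manifold of real dimension $4m\geq8$ is Einstein, so by Example~\ref{rmk1}(b) its curvature tensor $R$ is harmonic for the Lichnerowicz Laplacian with $c=\tfrac12$, and the Bochner formula reads
$$\Delta\tfrac12|R|^2=|\nabla R|^2+\tfrac12\,g\!\left(\mathfrak{R}(R^\mathfrak{sp}),\overline{R}^\mathfrak{sp}\right).$$
Next, since the $\mathbb{HP}^m$-component of $R$ is invariant under the holonomy algebra, every $L\in\mathfrak{sp}(m)\oplus\mathfrak{sp}(1)$ annihilates it, whence $LR=LR_0$; combining the generic estimate $|LR_0|^2\leq8|R_0|^2|L|^2$ of \cite{PW21} with the identity $|R^\mathfrak{sp}|^2=\tfrac43(3m+4)|R_0|^2$ from Lemma~\ref{lem4} gives
$$|LR|^2\leq\frac{6}{3m+4}|R^\mathfrak{sp}|^2|L|^2,$$
which is precisely the hypothesis of Lemma~\ref{lem2} with pinching constant $C=\tfrac{3m+4}{6}$.

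I would then carry out the bookkeeping that aligns the theorem's spectral assumption with Lemma~\ref{lem2}: a direct check (distinguishing $m$ even from $m$ odd) shows $\lfloor C\rfloor=\lfloor\tfrac{m+1}2\rfloor$ and $C-\lfloor C\rfloor=\tfrac{5+3(-1)^m}{12}$, so the displayed inequality in the statement is exactly $\mu_1+\cdots+\mu_{\lfloor C\rfloor}+(C-\lfloor C\rfloor)\mu_{\lfloor C\rfloor+1}\geq-k\rho$. Applying Lemma~\ref{lem2}(1) pointwise with $\kappa=\tfrac{-k\rho}{\lfloor C\rfloor+1}\leq0$ yields
$$g\!\left(\mathfrak{R}(R^\mathfrak{sp}),\overline{R}^\mathfrak{sp}\right)\geq-\frac{6k\rho}{3m+4}|R^\mathfrak{sp}|^2=-8k\rho|R_0|^2,$$
the last equality being Lemma~\ref{lem4} once more. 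Invoking scalar-flatness to obtain $|R_0|=|R|$ (the $\mathbb{HP}^m$-term drops out of $R$), this becomes the Weitzenb\"ock bound $g(\mathrm{Ric}(R),\overline{R})\geq-8k\rho|R|^2$, and Theorem~\ref{thm2Q} with $c=\tfrac12$ — so that the admissible range $0\leq8k<\tfrac{4(Q-1)}{cQ^2}$ translates into the stated bound on $k$ — forces $R\equiv0$, i.e.\ $M$ is flat.

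The main obstacle I anticipate is not any single estimate but the precise matching of the combinatorial data: one must confirm that the generic constant $8$ in $|LR_0|^2\leq8|R_0|^2|L|^2$ is the correct one for the full tensor $R$ (via $LR=LR_0$), and that the resulting $C=\tfrac{3m+4}{6}$ reproduces \emph{both} the floor $\lfloor\tfrac{m+1}2\rfloor$ and the fractional weight $\tfrac{5+3(-1)^m}{12}$ appearing in the hypothesis. The scalar-flat assumption is essential here and cannot be dropped in general, since it is exactly what allows $|R_0|$ to be replaced by $|R|$, so that the curvature term acquires the homogeneous form $-8k\rho|R|^2$ needed to apply Theorem~\ref{thm2Q}; when $k=0$ the bound $g(\mathfrak{R}(R^\mathfrak{sp}),\overline{R}^\mathfrak{sp})\geq0$ holds regardless, so the scalar-flat hypothesis becomes superfluous in that case.
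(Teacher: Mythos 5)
Your proposal is correct and follows essentially the same route as the paper's own proof: Einstein implies $R$ is harmonic for $c=\tfrac12$, the pinching constant $C=\tfrac{3m+4}{6}$ comes from Lemma~\ref{lem4} combined with the estimate $|LR|^2=|LR_0|^2\le 8|R_0|^2|L|^2$ of \cite{PW21}, Lemma~\ref{lem2} then yields the lower bound $-8k\rho|R_0|^2$, scalar-flatness gives $|R_0|=|R|$, and the vanishing theorem concludes (the paper invokes Theorem~\ref{thm2} where you invoke its quaternionic twin Theorem~\ref{thm2Q}, a distinction without a difference since they are proved together), with your floor/fractional-part bookkeeping $\lfloor C\rfloor=\lfloor\tfrac{m+1}{2}\rfloor$, $C-\lfloor C\rfloor=\tfrac{5+3(-1)^m}{12}$ being a worthwhile verification the paper leaves implicit. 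One caveat: your phrase ``translates into the stated bound on $k$'' is not literally accurate --- applying the vanishing theorem with $\kappa=8k$ and $c=\tfrac12$ requires $8k<\tfrac{8(Q-1)}{Q^2}$, i.e.\ $k<\tfrac{Q-1}{Q^2}$, whereas the theorem as stated allows the weaker restriction $k<\tfrac{Q-1}{Q}$; this mismatch is present in the paper itself (compare Theorem~\ref{thm:app1}, where the bound is $\tfrac{Q-1}{Q^2}$) and is evidently a typo in the statement, so your derivation gives the bound the proof actually supports.
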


\textbf{Acknowledgement}: The special thanks go to P. Petersen since his contribution is crucial to some results in this paper. He generously encouraged the authors to publish the results alone even though it should really be a joint paper. 
{The authors would like to express their deep thanks to the referee for his/her useful and critical comments  that lead to the improvement and correction of the presentation of this paper. The final version of this paper was completed during a visit of the second author at UCSB. The work of the second author was partially supported by a grant from the Niels Hendrik Abel Board under "Abel Visiting Scholar Program" of the International Mathematical Union (IMU). He would like to thank IMU for financial support, and Prof. Guofang Wei for her hospitality and encouragement.} We also greatly appreciate Prof. Ovidiu Munteanu's comments on the examples of weighted Poincar\'e inequality during the revision. 

\address{{\it Gunhee Cho}\\
	Department of Mathematics\\
	University of California, Santa Barbara\\
	552 University Rd, Isla Vista, CA 93117.}
{\\gunhee.cho@math.ucsb.edu}	
	\address{ {\it Nguyen Thac Dung}\\
	Faculty of Mathematics - Mechanics - Informatics \\
	Vietnam National University, University of Science, Hanoi \\
	Hanoi, Viet Nam and \\
	Thang Long Institute of Mathematics and Applied Sciences (TIMAS)\\
	Thang Long Univeristy\\ 
	Nghiem Xuan Yem, Hoang Mai\\
	Hanoi, Vietnam
}
{dungmath@vnu.edu.vn or {nguyenthac@ucsb.edu}}
%

\end{document}